\newcommand{\C}{\mathbb{C}}
\newcommand{\R}{\mathbb{R}}
\renewcommand{\phi}{\varphi}
\renewcommand{\leq}{\leqslant}
\renewcommand{\geq}{\geqslant}
\newcommand{\PB}[1]{\raisebox{-10pt}[10pt][0pt]{$\text{\Huge{*}}$}{}_{#1}}
\newcommand{\st}{\ \vert\ }
\renewcommand{\Bar}[1]{\overline{#1}}
\renewcommand{\Tilde}[1]{\widetilde{#1}}
\newcommand{\co}{\colon\thinspace}
\newtheorem{thm}{Theorem}[section]
\newtheorem{prop}[thm]{Proposition}
\newtheorem{lem}[thm]{Lemma}
\newtheorem{cor}[thm]{Corollary}
\theoremstyle{definition}
\newtheorem{df}[thm]{Definition}
\newtheorem{ej}[thm]{Example}
\theoremstyle{remark}
\newtheorem{rem}[thm]{Remark}
\newtheorem{rems}[thm]{Remarks}
\newcommand{\mscr}[1]{\mathscr{#1}}
\newcommand{\dg}{\mscr{D}\mscr{F}} 
\newcommand{\dgv}[1]{\mscr{D}\mscr{F}^{.#1}} 
\newcommand{\wg}{\mscr{W}} 
\newcommand{\nvb}[1]{{$#1$}-fold vector bundle\xspace}
\newcommand{\nvbs}[1]{{$#1$}-fold vector bundles\xspace}
\newcommand{\dvb}[8]{\xymatrix{#1 \ar[r]^{#6} \ar[d]_{#5} & #3 \ar[d]^{#8} \\ #2 \ar[r]^{#7} & #4}}
\newcommand{\dvbs}[4]{\dvb{#1}{#2}{#3}{#4}{}{}{}{}}
\newcommand{\decom}[1]{\overline{#1}}
\newcommand{\duer}{\mathbin{\ensuremath{\mathaccent\times\vert}}} 
\newcommand{\hop}[1]{\underline{\ensuremath{#1}}} 
\newcommand{\ph}[1]{p(#1)} 
\newcommand{\run}[1]{r(#1)} 
\newcommand{\BE}{E_\bullet} 
\DeclareMathOperator{\Dec}{Dec}
\newcommand{\parti}[1]{\mathscr{P}_{#1}}
\newcommand{\V}[1]{V_{#1}}
\newcommand{\G}[1]{\mathscr{L}_{#1}}
\newcommand{\stsec}[2]{\genfrac{\{}{\}}{0pt}{}{#1}{#2}} 
\newcommand{\id}{\operatorname{id}}
\newcommand{\comp}{\star}  
\newcommand{\pair}[2]{\left\langle #1\,\vert\,#2\right\rangle}
\newcommand{\ed}[2]{c_{#1\,#2}}
\newcommand{\gra}[1]{\overline{K}_{#1}}
\begin{document}

\title{\textbf{Duality functors for $n$-fold vector bundles}\footnote{Mathematics Subject 
Classification (MSC2000): 53D17 (primary), 18D05, 18D35, 20E99, 55R99 (secondary).}
\footnote{Keywords:\ double vector bundles, triple vector bundles, $n$-fold vector bundles, 
multiple vector bundles, duality, extensions of symmetric groups.}} 

\author{
\begin{tabular}{lcl}
Alfonso Gracia-Saz &  & K. C. H. Mackenzie\\
Department of Mathematics \& Statistics  & &   School of Mathematics \& Statistics\\
University of Victoria                    & &  University of Sheffield\\
PO BOX 3060 STN CSC                       & &  Sheffield, S3 7RH\\
Victoria, B.C.                            & &  United Kingdom\\
Canada V8W 3R4                            & &  \\
\url{alfonso@uvic.ca} & &   \url{K.Mackenzie@sheffield.ac.uk}
\end{tabular}
}

\date{\today}

\maketitle

\begin{abstract}
Double vector bundles may be dualized in two distinct ways and these duals are themselves 
dual. These two dualizations generate a group, denoted $\dg_2$, which is the symmetric group 
$S_3$ on three symbols. In the case of triple vector bundles the authors proved in a 
previous paper
that the corresponding group $\dg_3$ is an extension of $S_4$ by the Klein four-group. In 
this paper we show that the group $\dg_n$, for $n$-fold vector bundles, $n\geq 3$, is an 
extension of $S_{n+1}$ by a certain product of groups of order 2, and show that the centre
is nontrivial if and only if $n$ is a multiple of $4$. The methods employ an interpretation
of duality operations in terms of certain graphs on $(n+1)$ vertices. 
\end{abstract}


\section{Introduction}
\label{sect:int}

In a previous paper \cite{Gracia-SazM:2009}, the authors showed that the
group of duality functors of triple vector bundles has order 96, and is
an extension of the symmetric group $S_4$ by the Klein four-group. This 
followed work by one of us on the duality of double vector bundles 
\cite{Mackenzie:1999,Mackenzie:2005dts}.

Duality for double and multiple vector bundles originated in Poisson geometry; 
see \cite{Pradines:1988,MackenzieX:1994} and references there. 
In \cite{Mackenzie:1999} one of us applied Pradines' duality for vector bundle 
objects in the category of groupoids \cite{Pradines:1988} to double vector 
bundles and showed that the two duals of a double vector bundle are themselves dual. 
This result was so unexpected that it was natural to investigate the triple and higher 
cases. 

Double vector bundles have also been used in treatments of connection theory and of 
theoretical mechanics for many years 
\cite{Dieudonne:III,Besse:MAWGC,Tulczyjew:1977}, though without any consideration 
of duality. Voronov \cite[\S6]{Voronov:QmMt} has begun the study of bracket
structures on multiple vector bundles, using super techniques. 
In the present paper we are only concerned with duality for unstructured
multiple vector bundles, and we do not consider bracket structures or 
geometric applications. 

So far as we know, these groups have not appeared before; in particular, they
do not seem to be a modern formulation of a classical construction. 
Their significance is not fully clear, but the relation (\ref{eq:xyxyxy}) below 
which defines the duality group for $n = 2$ is the key to the compatibility of 
Lie algebroid structures on a double vector bundle \cite{Mackenzie:2011}. Whether 
there are corresponding results for bracket structures on multiple vector bundles 
for other $n$ will be investigated elsewhere. For the moment, we only remark that 
the sequence of groups for $n=2,3,4$ shows new features at each term and this is 
sufficient reason to investigate whether the sequence becomes regular.

Before describing the main results of the paper, we recall the results 
of the double case from \cite{Gracia-SazM:2009}. 
For double and perhaps triple vector bundles, the notation used is cumbersome,  
but it is designed to handle the $n$-fold case, which is the main concern of the paper. 

A \emph{double vector bundle} is a manifold $E_{1,2}$ with two vector bundle structures, over 
bases $E_1$ and $E_2$, each of which is a vector bundle on a manifold $M$, such that the 
structure maps of $E_{1,2}\to E_1$ (the bundle projection, the addition, the scalar 
multiplication, the zero section) are morphisms of vector bundles with respect to the other 
structure. We write $E$ to denote the entire structure, and sometimes for the total space. 
See Figure~\ref{fig:dvbs}(a),
\begin{figure}[h]
\begin{center}
\subfloat[]%
{\xymatrix@=8mm{
E_{1,2} \ar[r] \ar[d] & E_2 \ar[d]  &\\
E_1    \ar[r]   & M \\
}}
\qquad
\subfloat[]%
{\xymatrix@=8mm{
E_{1,2}\duer {E_1} \ar[r] \ar[d] & E_{12}^* \ar[d]  &\\
E_1    \ar[r]   & M \\
}}
\qquad
\subfloat[]%
{\xymatrix@=8mm{
E_{1,2}\duer{E_2} \ar[r] \ar[d] & E_2 \ar[d]  &\\
E_{12}^*    \ar[r]   & M \\
}}
\end{center}
\caption{(a) shows a double vector bundle and (b) and (c) its vertical and 
horizontal duals.\label{fig:dvbs}}
\end{figure}
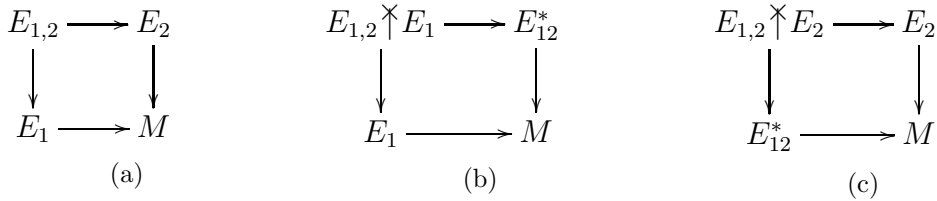

Concisely, a double vector bundle is a vector bundle object in the category of vector bundles. 
A definition in full detail is given in \cite[Chap.~9]{Mackenzie:GT}. 

The \emph{core} \cite{Pradines:DVB} of a double vector bundle $E$ is the set of 
elements $e\in E_{1,2}$ which project to zero in both $E_1$ and $E_2$. The core, 
which we denote $E_{12}$ (without the comma), is closed under both additions in $E$ 
and these additions coincide, giving $E_{12}$ a natural structure of vector bundle over $M$. 

Dualizing $E_{1,2}$ over its base $E_1$ leads to a double vector bundle as in 
Figure~\ref{fig:dvbs}(b), in which $E_2$ has been `replaced' by the dual of the core. 
We write the base over which the dualization takes place on the line to avoid multiple
superscripts, and use the symbol 
$\duer$ to avoid confusion with other uses of stars and asterisks. 

Likewise dualizing $E_{1,2}$ over $E_2$ leads to the double vector bundle shown in 
Figure~\ref{fig:dvbs}(c). For brevity, denote the action of dualizing vertically and
horizontally by $E^X$ and $E^Y$. 
Repeating these operations leads to the double vector bundles shown in Figure~\ref{fig:all6}. 

\begin{figure}[h]
\begin{center}
\subfloat
{\xymatrix@=8mm{
E \ar[r] \ar[d] & E_2 \ar[d]  &\\
E_1    \ar[r]   & M \\
}}
\qquad
\subfloat
{\xymatrix@=8mm{
E^X \ar[r] \ar[d] & E_{12}^* \ar[d]  &\\
E_1    \ar[r]   & M \\
}}
\qquad
\subfloat
{\xymatrix@=8mm{
E^{XY} \ar[r] \ar[d] & E_{12}^* \ar[d]  &\\
E_2    \ar[r]   & M \\
}}
\\
\subfloat
{\xymatrix@=8mm{
E^{XYX} \ar[r] \ar[d] & E_1 \ar[d]  &\\
E_2    \ar[r]   & M \\
}}
\qquad
\subfloat
{\xymatrix@=8mm{
E^Y \ar[r] \ar[d] & E_2 \ar[d]  &\\
E_{12}^*    \ar[r]   & M \\
}}
\qquad
\subfloat
{\xymatrix@=8mm{
E^{YX} \ar[r] \ar[d] & E_1 \ar[d]  &\\
E_{12}^*    \ar[r]   & M \\
}}
\end{center}
\caption{\label{fig:all6}}
\end{figure}

In particular $E^{XYX}$ has side bundles $E_1$ and $E_2$ but these have been 
interchanged. Note that $E^{XYX}$ is not canonically isomorphic to the \emph{flip} 
of $E$, by which we mean the double vector bundle obtained by interchanging the 
two structures on $E$, as for the canonical involution on a double tangent bundle. 
Roughly speaking, although $E^{XYX}$ has side bundles $E_1$ and $E_2$, its core 
has been reversed; see the discussion following Prop.~2.10 in \cite{Gracia-SazM:2009}. 
However $E^{YXY}$ is canonically isomorphic to $E^{XYX}$ and we therefore write
\begin{equation}
\label{eq:xyxyxy}
XYX = YXY. 
\end{equation}
Together with $X^2 = Y^2 = 1$ it follows that 
the dualizations of a double vector bundle form the symmetric group on three symbols, 
and every sequence of dualization operations applied to $E$ results in a double
vector bundle canonically isomorphic to one of those in Figure~\ref{fig:all6}. 

As \cite{Gracia-SazM:2009} showed, it is crucial to regard the operations $X$ and $Y$ as 
functors on appropriate categories. We denote the group of these \emph{dualization
functors} by $\dg_2$. 

Write $E_0 = E_{12}^*$ so that the two side bundles and the dual of the core of 
$E$ are $E_1, E_2, E_0$. The core of $E^X$ is $E_2^*$ and so the side bundles and the 
dual of the core of $E^X$ are $E_1, E_0, E_2$. The side bundles and the dual of the core 
of $E^Y$ are $E_0, E_2, E_1$. Thus $\dg_2$ can be regarded as the symmetric group on the 
bundles $E_1, E_2, E_0$. 

The notation for a triple vector bundle, as used in \cite{Gracia-SazM:2009}, is shown 
in Figure~\ref{fig:triple}(a). The total space is denoted by $E_{1,2,3}$, the double
vector bundles which form the lower faces are denoted by $E_{i,j}$, and we write $E_i$ for 
the vector bundles which form the edges abutting $M$. 
In all figures we read oblique arrows as coming out of the page.

\begin{figure}[h]
\subfloat[A triple vector bundle \dots]%
{\xymatrix@=3mm{
E_{1,2,3}\ar[rr]^{X}\ar[dd]^{Z}\ar[rd]^{Y}   & & E_{2,3}\ar'[d][dd] \ar[dr] &\\
& E_{1,3}   \ar[rr] \ar[dd]    & & E_3 \ar[dd]  \\
E_{1,2}\ar'[r][rr] \ar[dr]  &       & E_2 \ar[dr] &\\
& E_1  \ar[rr] & & M\\
}}
\quad
\subfloat[\dots{} and its core structure.]%
{\xymatrix@=1mm{
&& E_{3,12}\ar[dddd] &&\\
& E_{2,31}\ar[rrdd] &&&\\
E_{1,23}\ar[rrrr] &&&& E_{23}\\
&&& E_{13} &\\
&& E_{12} &&\\
}}
\quad
\subfloat[The dual of $E$ over $E_{2,3}$]%
{\xymatrix@=2mm{
{E_{1,2,3} \duer E_{2,3}} \ar[rr] \ar[dd] \ar[dr] & & E_{2,3} \ar'[d][dd] \ar[dr] &\\
& E_{3,12}\duer E_3 \ar[rr] \ar[dd]         & & E_3 \ar[dd]  \\
E_{2,31}\duer E_2  \ar'[r][rr] \ar[dr]         & & E_2 \ar[dr] &\\
& E_{123}^*  \ar[rr] & & M\\
}}
\caption{\label{fig:triple}}
\end{figure}

Figure~\ref{fig:triple}(b) shows the core structure of $E$. The core of $E_{i,j}$
is denoted $E_{ij}$ without the comma; it is a 
vector bundle over $M$. The core of the top face of $E$ is denoted $E_{3,12}$; 
it is a vector bundle over $E_3$. The vector bundle structure of $E_{1,2,3}$ with 
base $E_{1,2}$ restricts to $E_{3,12}$ and gives it also the structure of a vector 
bundle over $E_{12}$; with these two structures $E_{3,12}$ is a double vector bundle 
with side bundles $E_3$ and $E_{12}$. 

The cores of the rear and left faces are denoted $E_{2,31}$ and $E_{1,23}$ and 
are double vector bundles in a similar way. 

These three \emph{core double vector bundles} have the same core, called the
\emph{ultracore}; this is denoted $E_{123}$ and is a vector bundle over $M$. It is 
the set of all elements of $E_{1,2,3}$ which project to the double zero element in 
each of $E_{1,2}$, $E_{2,3}$ and $E_{3,1}$. 

For a triple vector bundle one may dualize in three directions, which we denote for the
moment by $X$, $Y$ and $Z$. Take the dual over $E_{2,3}$, as in Figure~\ref{fig:triple}(c),
to be $E^X$ and write
$E_0 = E_{123}^*$. Then $X$ exchanges $E_1$ and $E_0$ and leaves $E_2$ and $E_3$
fixed. In terms of the effect which $X$ has on the four bundles $E_1$, $E_2$, $E_3$
and $E_0$, we can regard $X$ as the transposition $(0\,1)$ in $S_4$. Likewise $Y$
acts as $(0\,2)$ and $Z$ as $(0\,3)$. These three transpositions generate $S_4$, so 
for every $\sigma\in S_4$ there is a word $W$ in $X$, $Y$ and $Z$ which acts as~$\sigma$. 

To express this more precisely, write $\dgv{1}_3$ for the group on $X, Y, Z$
subject to $X^2 = Y^2 = Z^2 = 1$ and to
\begin{equation}
\label{eq:xyz}
(XY)^3 = (YZ)^3 = (ZX)^3 = 1.   
\end{equation}
The group $\dgv{1}_3$ should be thought of as `version~1' of the duality functor
group for triple vector bundles. The discussion above shows that there is a surjective 
morphism from $\dgv{1}_3$ to the symmetric group $S_4$. 

In particular, the word $(XYXZ)^2$ is mapped to the identity of $S_4$. 
It is shown in \cite{Gracia-SazM:2009} that $(XYXZ)^2$ has order 2 in $\dgv{1}_3$, 
and that the surjection $\dgv{1}_3\to S_4$ has for kernel a
Klein four--group $K_4$, consisting of $(XYXZ)^2$ together with its conjugates
and the identity. 

Now define $\dgv{2}_3$ to be the quotient of $\dgv{1}_3$ over the relations
\begin{equation}
\label{eq:xyxz}
(XYXZ)^4 = (YZYX)^4  = (ZXZY)^4 = 1. 
\end{equation}
Then the above discussion may be formulated as the statement that $\dgv{2}_3$ is the 
duality functor group $\dg_3$. 

This completes a brief review of the main results in the triple case. In the
present paper we first show that, for all $n\geq 4$, there is again a short exact sequence
\begin{equation}  
\label{eq:ses-intro}
\xymatrix{1 \ar[r] & K_{n+1} \ar[r] & \dg_n \ar[r]  & S_{n+1} \ar[r] & 1.}
\end{equation}
where $K_{n+1}$ is a direct product of copies of $C_2$, the cyclic group of order 2. 
To calculate the number of copies we introduce in \S\ref{sect:tgk} an equivalent 
description of $K_{n+1}$ in terms of certain graphs on $n+1$ vertices. This interpretation 
was not needed in the case $n = 3$ but is the key to the cases $n\geq 4$. We find in
Corollary \ref{cor:orderK} that $K_{n+1}$ is the direct product of $\frac{1}{2}(n+1)(n-2)$ 
copies of $C_2$. 

The $\dg_n$ are thus unexpectedly large and, as far as we know, have no precedent in
earlier or classical work. It is tempting to look for smaller groups which embody equivalent
information, but we see no prospect of this. The relationship between $\dg_n$ and 
$K_{n+1}$ is reminiscent of the relationship between braids and pure braids, or between 
gauge transformations and pure gauges. In these theories, it is generally sufficient to 
concentrate on the pure case, but that is not so here; one cannot focus just on $K_{n+1}$.  
This is already clear from the double and triple cases. 

We show in \S\ref{sect:description}, using the description of the kernel in terms of 
graphs, that (\ref{eq:ses-intro}) splits for $n = 4$ and for $n= 2 \pmod{4}$; we do 
not know what the situation is for $n = 8$. We also show that for $n= 0 \pmod{4}$ the 
centre of $\dg_n$ has order 2, and is trivial for all other~$n$. 

A complete description of $\dg_n$ for general $n$ will probably require a set of relations
in terms of the dualization operators. For double and triple vector bundles the relations
are as given in (\ref{eq:xyz}) and (\ref{eq:xyxz}). 
At the end of the paper we find that the relations for $\dg_4$, in addition to those 
corresponding to (\ref{eq:xyz}) and (\ref{eq:xyxz}), require words of length 24 and 32. 
It will be interesting if these have geometric interpretations like that of `cornering' 
for the relation in the double case \cite{Mackenzie:2005dts}. 

The paper divides into two parts. In the first, consisting of Section \ref{sect:not}
and Section \ref{sect:dfg}, we are concerned to set up the notation and terminology 
needed to work with $n$-fold vector bundles. 
This takes some time, but it is necessary
to have a systematic notation for the spaces associated with dualizing an $n$-fold 
vector bundle before introducing the notation for the appropriate concept of
automorphism. As we emphasized in \cite{Gracia-SazM:2009}, the problem --- when are 
the results of two sequences of dualization operations canonically isomorphic? --- is 
itself difficult to formulate effectively, and that is what makes the
material of Section \ref{sect:not} and Section \ref{sect:dfg} necessary. 

The second part concerns the actual calculations for the duality groups. Many readers 
may prefer to go directly to Section \ref{sec:thetaX} and refer back as needed. 
As much as possible, we have avoided repeating material from \cite{Gracia-SazM:2009}. 

\section{$n$-fold vector bundles}
\label{sect:not}

In order to work with $n$-fold vector bundles, we need an effective notation
for the various side bundles and cores. The actual definition of an $n$-fold 
vector bundle is in \ref{df:nvb}. 

We first extend the notation used in Figures~\ref{fig:dvbs} and \ref{fig:triple}. 
The top space of an $n$-fold vector bundle will generally be denoted by $E_{1,\dots,n}$. 
The various faces will be $k$-fold vector bunndles for $k\leq n$, indexed by $k$-element 
subsets of $\{1, \dots, n\}$ and the cores of the various faces will be denoted by removing
appropriate commas from the suffices. Thus we need a notation which makes clear what 
is denoted by, for example, $E_{1,234,5}$. Further, when considering duals we will
need to start with index sets other than $\{1, \dots, n\}$. 

Later in the paper we will need to use other sets of sets of integers to denote 
certain maps of $n$-fold vector bundles, and for this reason we use a distinctive
terminology, `hops and runs', for the sets which index the faces and cores. 

\subsection*{Hops and runs} 
\addcontentsline{toc}{subsection}{\protect\numberline{}{Hops and runs}}

Let $A$ be a finite set.  
A \emph{hop} across $A$ is a set of non-empty, disjoint subsets of $A$.   
The \emph{length} of a hop is the number of subsets it contains.

For instance, if $A = \{ 1, 2, 3, 4, 5\}$, then $\{ \{1, 3\}, \{4\} \}$ is a hop across 
$A$ of length 2. When there is no ambiguity, we may omit the curly braces; in that 
case, we will separate different subsets by commas, and we will not use a comma to 
separate elements in the same subset.  For instance, instead of 
${\{ \{1, 3\}, \{4\}\}}$ we may write $\{13, 4\}$ or just $13, 4$.  
When we are using hops as indexes, in 
particular, we will simply write $E_{13, 4}$ instead of $E_{\{ \{1,3\} , \{4\} \}}$. 
(In this paper we will not consider examples with $n>9$.)

For any positive integer $n$, we denote the set $\{ 1, \ldots, n \}$ by \hop{n}.  
Given our abuses of notation, we will also write $\hop{n}$ for the hop 
that has $n$ elements of size $1$, namely $\{ \{1\}, \ldots, \{n \} \}$.

A \emph{pure hop} is a hop all of whose elements are of size $1$.  
A \emph{run} is a hop with a single element. (That is, one `runs' so long as there is
no comma, and has to `hop' over any comma.) For instance, $\{ \{1\}, \{3\}, \{4\}\}$ is 
a pure hop, and $\{ \{ 1, 3, 4\} \}$ is a run.  Abusing notation, we will regard a run 
both as a subset of $A$ and as a set with one element (which is a subset of $A$).  
If $A$ is a set with size $n$, then there are $2^n$ pure hops and $2^n-1$ runs on $A$.
Given a hop $H$, there is a natural way to make it into a pure hop, which we denote 
$\ph{H}$, and a natural way to make it into a run, which we denote $\run{H}$.  For 
instance, $\ph{13,4} = \{1,3,4\}$ and $\run{13,4} = \{134 \}$.

Given a set $A$, a hop $H$ across $A$, and a subset $I$ of $A$, the notation $I \in H$ 
has the usual meaning. If $k \in A$, we will use the notation $k \in \in H$ to mean that 
there exists $I \in H$ such that $k \in I$.  
The negation of $k \in \in H$ will be written as $k \notin \in H$.  
Given two elements $i, j \in \in H$, we say that $i$ and $j$ are 
\emph{together}\index{together} in $H$ if there exists $I \in H$ 
such that $i, j \in I$; otherwise we say that $i$ and $j$ are \emph{separate}\index{separate} 
in $H$. For instance, $13 \in \{13, 4\}$,  $3 \notin \{13, 4\}$,  
$3 \in \in \{13, 4\}$, $2 \notin \in \{13, 4\}$;  
$1$ and $3$ are together in $\{13, 4\}$; $1$ and $4$ are separate in $\{13, 4\}$.
A hop $H$ across $A$ is called \emph{complete}\index{complete} if 
$j \in \in H$ for all $j \in A$. Thus a complete hop across $A$ is
a partition of $A$.  

Two hops $H_1$ and $H_2$ are called \emph{disjoint}\index{disjoint} if there is no 
element $i$ such that $i \in \in H_1$ and $i \in \in H_2$.  Given two such hops we 
denote the hop $H_1 \cup H_2$ by $H_1, H_2$ (inserting a comma). 

Given two runs $R_1$ and $R_2$ which are disjoint, we define $R: = R_1R_2$ as the run $R$ whose only element is 
the union of the only elements in $R_1$ and $R_2$.  Compare: if $R_1 = \{\{1\}\} = 1$ and $R_2 = \{\{2,3\} \}= 23$, 
then $R_1, R_2 = \{ \{1\}, \{ 2,3\}\} = 1, 23$ and $R_1R_2 = \{\{1,2,3\}\} = 123$.  

Let $H$ be a hop in $A$ and let $i$ be any element of $A$ such that $i \in \in A$.  We define $H \backslash i$ as 
the hop obtained from $H$ when the subset $I \in H$ such that $i \in I$ has been replaced with 
$I \setminus \{ i \}$, or removed if $I = \{ i \}$.  We define $H \backslash i, j$ 
as $( H \backslash i) \backslash j $.  For instance, if $H = \{13, 4\}$, then $H \backslash 3 = \{1, 4\}$, 
whereas $H \backslash 4 = \{13\}$. 

\subsection*{Definition of $n$-fold vector bundle}
\addcontentsline{toc}{subsection}{\protect\numberline{}Definition of $n$-fold vector bundle}

\begin{df}
\label{df:nvb}
Let $n$ be a non-negative integer.  An \emph{$n$-fold vector bundle}  
consists of a smooth manifold $E_H$ for every pure hop $H$ across $\hop{n}$, together with a 
vector bundle structure on $q^{H,i}_H \co E_{H,i} \to E_H$ for every pure hop $H$ and 
$i \notin \in H$, such that 
$$
\dvbs{E_{H,i,j}}{E_{H,i}}{E_{H,j}}{E_H}
$$ 
is a double vector bundle for every pure hop $H$, and every $i, j \notin \in H$.  

We refer to the whole structure (that is, to the $n$-fold vector bundle) as $E$.
The \emph{total space of} $E$ is $E_{\hop{n}}$, and $M : = E_{\emptyset}$ is the 
\emph{final base}.  
\end{df}

Later in the section we will extend this definition to allow index sets other than $\hop{n}$. 

\begin{rems}  
(i) The commas in the subscripts are important: we are using pure hops (not runs) as indices. 
The various cores associated with the structure will be labeled by hops which are not pure. 

(ii)  The concept of ``$n$-fold vector bundle'' differs from the concept 
of ``$n$-vector bundle'' that appears in category theory. 
We refer to \nvbs{n} generically as \emph{multiple structures} to distinguish them from 
``higher vector bundles''. 

(iii) It might seem that Definition \ref{df:nvb} should include more compatibility conditions. 
For instance, we could require that certain maps in the structure of an \nvb{n} form a 
morphism of \nvbs{k} for $k < n$.  Such conditions are all implied by the current 
definition. 

(iv) In \cite{Gracia-SazM:2009} we added a further condition to the definition, 
namely that a certain combination of the bundle projections is a surjective submersion. 
This turns out to be implied by the rest of 
the definition, as is discussed after Definition \ref{df:decomp} 
below.

(v) A $0$-fold vector bundle is just a manifold. 
A $1$-fold vector bundle is a vector bundle in the usual sense. 
\end{rems}

According to Definition \ref{df:nvb}, a $2$-fold vector bundle is precisely 
a double vector bundle as defined in \cite{Gracia-SazM:2009} and references there.  

Applying Definition \ref{df:nvb} with $n = 3$, a $3$-fold vector bundle consists of 
a commutative diagram as in Figure \ref{fig:triple}(a), such that every two-dimensional 
face forms a double vector bundle.  Again, this definition of \nvb{3} agrees with the 
definition of triple vector bundle \cite{Mackenzie:2005dts,Gracia-SazM:2009}. 

In general, the $2^n$ manifolds that constitute an \nvb{n} can be arranged as the 
vertices of an $n$--dimensional 
cube in a commutative diagram. We refer to this as the \emph{outline} of the \nvb{n}. 

\begin{ej}
Let $E$ be an \nvb{n}. Then $TE$ has a natural structure of an \nvb{(n+1)}. To
specify this, first apply the tangent functor to every map in the outline of $E$; this 
produces an \nvb{n} with final base $TM$. Next, for every pure hop $H$ 
across $\hop{n}$, define $(TE)_{H,n+1} : = T(E_H)$ with its structure as tangent
bundle of $E_H$. The result is an \nvb{(n+1)}, called the \emph{tangent
prolongation of $E$}. 
\end{ej}

\begin{ej}  \label{ej:dec}
Let $M$ be a manifold and let $n$ be a positive integer. Suppose given, for every 
run $R$ across 
$\hop{n}$, a vector bundle $E_R \to M$. From this data we will construct an \nvb{n}. 

For every pure hop $H$ across $\hop{n}$ write  $E_H$ for the pullback manifold
$\PB{R}E_R$ where the pullback is taken over all runs $R$ such that 
$R \subseteq \run{H}$. Suppose that $H_1$ and $H_2$ are two pure hops related by 
$H_1 = H_2, i$ for some $i \in \hop{n}$. Form the Whitney sum vector bundle 
$W := \oplus_S E_S$ on $M$, where the $\oplus$ is over all runs $S$ across $\hop{n}$ 
with $i\in\in S$. 
Now the inverse image vector bundle of $W\to M$ across the projection 
$E_{H_2}\to M$ gives $E_{H_1}$ a vector bundle structure on base $E_{H_2}$. 

In this way we have constructed an \nvb{n} with total space the manifold 
$$
E_{\hop{n}} = \PB{}\{E_R\st R\text{ a run across } \hop{n}\}.
$$
This is the \emph{decomposed $n$-fold vector bundle constructed from the $E_R$}. 
Although $E_{\hop{n}}$ may be considered the Whitney sum of all the $E_R$, this
is not part of the structure of the \nvb{n}, and is usually not relevant. 
\end{ej}

\begin{df}
Let $E$ and $F$ be two \nvbs{n}.  A \emph{morphism of $n$-fold vector bundles} 
$\phi\co E \to F$ consists of a set of smooth maps 
$\phi_H \co E_H \to F_H$ for every pure hop $H$ across $\hop{n}$, 
such that, for every pure hop $H$ and every $i \notin \in H$, the following
\begin{equation}
\label{eq:morph}
\dvb{E_{H,i}}{E_H}{F_{H,i}}{F_H}{}{\phi_{H,i}}{\phi_H}{}
\end{equation}
is a morphism of vector bundles.  An \emph{isomorphism of $n$-fold vector bundles} 
is a morphism which is a
diffeomorphism. 
\end{df}

Note that (\ref{eq:morph}) is a diagram of a morphism, not of a double structure. 

Various alternative formulations of Definition \ref{df:nvb} are possible. 
Consider an \nvb{n} $E$ and let $j \in \hop{n}$. Write $E^{(j)}$ for the 
\nvb{(n-1)} consisting of the manifolds $E_H$ for pure hops $H$ such that 
$j \in \in H$ and the maps between them. In a similar way, write $E_{(j)}$ for the 
\nvb{(n-1)} consisting of the manifolds 
$E_H$ for pure hops $H$ such that $j \notin \in H$ and the maps between them. 
The rest of the structure on the \nvb{n} $E$ can be described as a morphism of 
\nvbs{(n-1)} from $E^{(j)}$ to $E_{(j)}$. 

By reversing this description, an $n$-fold vector bundle can be recursively 
defined as a ``vector bundle object in the category of $(n-1)$-fold vector bundles''. 


\subsection*{The cores of an $n$-fold vector bundle}
\addcontentsline{toc}{subsection}{\protect\numberline{}{The cores 
of an $n$-fold vector bundle}}

The core structure of a triple vector bundle was recalled in the Introduction;
see Figure \ref{fig:triple}(b). The approach used there may be extended
to \nvbs{n} for any $n$, and we outline it very briefly. Consider an \nvb{n} $E$, 
and any $1\leq i\neq j\leq n$. Write $H = \hop{n}\backslash i,j$. Then 
$E_{\hop{n}} = E_{i, j, H}$ is a
double vector bundle with side bundles $E_{H,i}$ and $E_{H,j}$ and final base
$E_H$. As such it has a core, which we denote $E_{ij,H}$, and which is a vector 
bundle on base $E_H$. Further, for each $k\in\hop{n}$, $k\neq i,j$, the vector
bundle structure on $E_{\hop{n}}\to E_{\hop{n}\backslash k}$ restricts to give
$E_{ij,H}$ a vector bundle structure on base $E_{ij,H\backslash k}$. Thus $E_{ij,H}$
is an \nvb{(n-1)}. 

This process continues inductively until we reach the ultracore, which we need
throughout the rest of the paper. 

\begin{df}
Let $E$ be an \nvb{n}. The \emph{ultracore}\index{ultracore} of $E$ is the 
set of elements $e \in E$ such that, for every $i, j  \in \underline{n}$,
with $i \neq j$,  the element $q^{\underline{n}}_{\underline{n} \setminus i} (e)$ 
is a zero of the vector bundle $E_{\underline{n} \setminus i}  \to
E_{\underline{n} \setminus i, j}$.
\end{df}

We denote the ultracore of $E$ by $E_{\run{\hop{n}}}$ 
or by $C(E)$. As in the triple case, the $n$ vector bundle structures on 
$E_{\run{\hop{n}}}$ coincide and make the ultracore a vector bundle on $M$. 

We will also need the ultracores of various substructures of $E$. Let $R$ be
a run across $\hop{n}$ and write $H = \ph{R}$. Assume that $H$ is not $\hop{n}$
(we have already considered that case), and that it contains more than one element. 
If $H$ has $k$ elements, $1 < k < n$, then $E_H$ is a \nvb{k} with respect to the
structure induced from $E$. Write $E_R$ for the ultracore of $E_H$. 

We now have, for each run $R$ across $\hop{n}$, a vector bundle $E_R$ on base $M$,
which is the ultracore of $E_{\ph{R}}$. To avoid trouble with extreme cases, we define 
the ultracore of a 1-fold vector bundle to be the vector bundle itself, 

\begin{rem}
There is a complicated system of cores of substructures lying between the
$E_R$ just defined, and the cores of the double vector bundles which have $E_{\hop{n}}$
as total space. We describe some of these in the next subsection. 
\end{rem}

\begin{df}  \label{defin:bb}
Let $E$ be a \nvb{n}.  The \emph{building bundles} of $E$ are the vector bundles 
$E_R \to M$ for all runs $R$ across $\hop{n}$. The set of building bundles is
denoted $\BE$. 
\end{df}

In particular the side bundles $E_i$, $1\leq i\leq n$, with base $M$, are 
building bundles. Altogether there are $2^n-1$ building bundles.  

Applying the construction of Example \ref{ej:dec} to the building bundles yields an
\nvb{n} which we call the \emph{decomposed form}\index{decomposed form} of $E$ and denote 
by $\decom{E}$. 

\begin{df}  
\label{defin:stato}
Let $E$ and $F$ be two \nvbs{n} which have the same building bundles.  
A \emph{statomorphism}\index{statomorphism} $\phi\co  E \to F$ is a morphism of 
\nvb{n} which induces the identity on all building bundles.  
\end{df}

A statomorphism is necessarily an isomorphism; indeed any morphism of \nvbs{n}  
which induces an isomorphism of vector bundles on all building bundles is an 
isomorphism of \nvbs{n}.  

Statomorphisms and building bundles are essential for the calculation 
in Section \ref{sec:thetaX} of the duality functor groups. 

\subsection*{The duals of an $n$-fold vector bundle}
\addcontentsline{toc}{subsection}{\protect\numberline{}{The duals of an $n$-fold vector bundle}}

Let $E$ be an \nvb{n}. The total space $E_{\hop{n}}$ has $n$ distinct structures
of vector bundle. Take $i\in\hop{n}$ and consider the dualization of
$E_{\hop{n}}$ as a vector bundle over $E_{\hop{n}\setminus i}$. 
We must show that dualization does lead to another \nvb{n}. 
There are two aspects to the problem. 

First, for each $j\neq i$, $j\in\hop{n}$, there is a double vector bundle for 
which $E_{\hop{n}}$ is the total space and $E_{\hop{n}\setminus i}$ is a side bundle. 
See Figure \ref{fig:topij}(a). We denote the core by $E_{H,ij}$ where 
$H = \hop{n}\setminus i,j$. Recall that 
for the core of a double vector bundle which is contained within a multiple structure, our
rule for notation is to combine the two indices which distinguish the side bundles, leaving 
unchanged the hop which indexes the final base of the double vector bundle. 

\begin{figure}[h]
\centering
\subfloat[]
{\xymatrix@=15mm{
E_{\hop{n}} \ar[d]\ar[r] & E_{\hop{n}\setminus j}\ar[d]\\
E_{\hop{n}\setminus i} \ar[r] & E_{\hop{n}\setminus i,j}\\
}}
\qquad
\subfloat[]
{\xymatrix@=15mm{
E_{\hop{n}}\duer E_{H,j} \ar[d]\ar[r] & E_{H,ij}\duer E_{H}\ar[d]\\
E_{H,j} \ar[r] & E_{H}\\
}}
\caption{\ \label{fig:topij}}  
\end{figure}

The dual of Figure \ref{fig:topij}(a) over $E_{\hop{n}\setminus i}$ 
is shown in Figure \ref{fig:topij}(b). 
A specific example, with $n = 4$, $i = 1$, $j = 2$, is shown in Figure
\ref{fig:4-1}. 

\begin{figure}[h]
\centering
\subfloat[]{
\xymatrix@=15mm{  
E_{1,2,3,4} \ar[r] \ar[d] & E_{1,3,4} \ar[d] \\
E_{2,3,4} \ar[r]          & E_{3,4} \\
}}
\qquad
\subfloat[]{
\xymatrix@=15mm{  
E_{1,2,3,4}\duer E_{2,3,4} \ar[r] \ar[d] & E_{12,3,4}\duer E_{3,4} \ar[d] \\
E_{2,3,4} \ar[r]          & E_{3,4} \\
}}
\caption{\label{fig:4-1}}
\end{figure}

In general there will be $(n-2)$ further double vector bundles to consider. 
In the case $n = 4$ these are shown with their duals over $E_{2,3,4}$ in
Figures \ref{fig:4-2} and \ref{fig:4-3}.  

\begin{figure}[h]
\centering
\subfloat[]{
\xymatrix@=15mm{  
E_{1,2,3,4} \ar[r] \ar[d] & E_{1,2,4} \ar[d] \\
E_{2,3,4} \ar[r]          & E_{2,4} \\
}}
\qquad
\subfloat[]{
\xymatrix@=15mm{  
E_{1,2,3,4}\duer E_{2,3,4} \ar[r] \ar[d] & E_{13,2,4}\duer E_{2,4} \ar[d] \\
E_{2,3,4} \ar[r]          & E_{2,4} \\
}}
\caption{\label{fig:4-2}}
\end{figure}

\begin{figure}[h]
\centering
\subfloat[]{
\xymatrix@=15mm{  
E_{1,2,3,4} \ar[r] \ar[d] & E_{1,2,3} \ar[d] \\ 
E_{2,3,4} \ar[r]          & E_{2,3} \\
}}
\qquad
\subfloat[]{
\xymatrix@=15mm{  
E_{1,2,3,4}\duer E_{2,3,4} \ar[r] \ar[d] & E_{14,2,3}\duer E_{2,3} \ar[d] \\
E_{2,3,4} \ar[r]          & E_{2,3} \\
}}
\caption{\label{fig:4-3}}
\end{figure}

We now have four vector bundle structures on $E_{1,2,3,4}\duer E_{2,3,4}$, with bases 
respectively $E_{2,3,4}$, $E_{12,3,4}\duer E_{3,4}$, $E_{13,2,4}\duer E_{2,4}$
and $E_{14,2,3}\duer E_{2,3}$. The triple vector bundle structure of the first
remains unchanged in this dualization. We need to show that the other
three have natural triple vector bundle structures and that these
`fit together' correctly. 

First consider $E_{12,3,4}\duer E_{3,4}$. The bundle of which this is the dual
is a triple vector bundle in a natural way. Namely, $E_{12,3,4}$ is the core of 
the double vector bundle in Figure \ref{fig:4-1}(a). The two other vector
bundle structures on $E_{1,2,3,4}$, on bases $E_{1,2,3}$ and $E_{1,2,4}$,
restrict to give vector bundle structures on $E_{12,3,4}$ over bases
$E_{12,3}$ and $E_{12,4}$, and these form the triple vector bundle shown
in Figure \ref{fig:4-tvb}(a). The cores of the upper faces of this triple vector
bundle are (left) $E_{124,3}$, (rear) $E_{123,4}$ and (top) $E_{12,34}$. 
The ultracore is $E_{1234}$. 

\begin{figure}[h]
\centering
\subfloat[]{
{\xymatrix@=3mm{
E_{12,3,4}\ar[rr] \ar[dd] \ar[rd]    & & E_{12,4}\ar'[d][dd] \ar[dr] &\\
& E_{12,3}   \ar[rr] \ar[dd]    & & E_{12} \ar[dd]  \\
E_{3,4}\ar'[r][rr] \ar[dr]  &       & E_4 \ar[dr] &\\
& E_3  \ar[rr] & & M\\
}}}
\qquad
\subfloat[]{
{\xymatrix@=3mm{
E_{12,3,4}\duer E_{3,4}\ar[rr] \ar[dd] \ar[rd]    & 
                                  & E_{123,4}\duer E_4\ar'[d][dd] \ar[dr] &\\
& E_{124,3}\duer E_3   \ar[rr] \ar[dd]    & & E_{1234}^* \ar[dd]  \\
E_{3,4}\ar'[r][rr] \ar[dr]  &       & E_4 \ar[dr] &\\
& E_3  \ar[rr] & & M\\
}}}
\caption{\label{fig:4-tvb}}
\end{figure}

The dual over $E_{3,4}$ is shown in Figure \ref{fig:4-tvb}(b). In the same way
we obtain the two triple vector bundles shown in Figure \ref{fig:4-tvb-2}. 

\begin{figure}[h]
\subfloat[]{
{\xymatrix@=3mm{
E_{13,2,4}\duer E_{2,4} \ar[rr] \ar[dd] \ar[rd]    & & 
               E_{123,4}\duer E_4 \ar'[d][dd] \ar[dr] &\\
& E_{134,2}\duer E_2  \ar[rr] \ar[dd]    & & E_{1234}^* \ar[dd]  \\
E_{2,4}\ar'[r][rr] \ar[dr]  &       & E_4 \ar[dr] &\\
& E_2  \ar[rr] & & M\\
}}}
\
\subfloat[]{
{\xymatrix@=3mm{
E_{14,2,3}\duer E_{2,3}\ar[rr] \ar[dd] \ar[rd]    & 
                                  & E_{134,2}\duer E_2\ar'[d][dd] \ar[dr] &\\
& E_{124,3}\duer E_3   \ar[rr] \ar[dd]    & & E_{1234}^* \ar[dd]  \\
E_{2,3}\ar'[r][rr] \ar[dr]  &       & E_2 \ar[dr] &\\
& E_3  \ar[rr] & & M\\
}}}
\caption{\label{fig:4-tvb-2}}
\end{figure}

We can now complete the outline of the \nvb{4} towards which we are working.
This is shown in Figure \ref{fig:shard}. 

\begin{figure}[h]
\xymatrix@!{  
& E_{1,2,3,4}\duer E_{2,3,4} \ar[r] \ar[d] \ar[ddl] \ar[ddr]  
            & E_{12,3,4}\duer E_{3,4} \ar[d] \ar[ddl] \ar[ddr] &\\
& E_{2,3,4} \ar[r] \ar[ddl] \ar[ddr] & E_{3,4} \ar[ddl] \ar[ddr] &\\
E_{14,2,3}\duer E_{2,3} \ar[r] \ar[d] \ar[ddr] & E_{124,3}\duer E_3 \ar[d] \ar[ddr] 
& E_{13,2,4}\duer E_{2,4} \ar[r]\ar[d] \ar[ddl] & E_{123,4}\duer E_4 \ar[d] \ar[ddl]\\
E_{2,3} \ar[r] \ar[ddr]& E_3 \ar[ddr]
& E_{2,4} \ar[r] \ar[ddl] & E_4 \ar[ddl] \\
& E_{134,2}\duer E_2 \ar[r] \ar[d] & E_0 \ar[d] &\\
& E_2 \ar[r] & M &\\
}
\caption{\label{fig:shard}}
\end{figure}

\clearpage

The two double vector bundles in the centre of Figure \ref{fig:shard} have 
been found from their total spaces and the arrangement of the
triple vector bundle $E_{2,3,4}$ which is left unchanged by the dualization. 
Their structure as triple vector bundles can be seen to be consistent with
those in Figure \ref{fig:4-tvb-2}, which were obtained as duals of triple 
vector bundle structures on the cores of double vector bundles with total 
space $E_{1,2,3,4}$. 

There is one `new' triple vector bundle in Figure \ref{fig:shard}, namely
that obtained by deleting the triple vector bundle $E_{2,3,4}$. The proof
that this is a triple vector bundle follows the same pattern as for the
corresponding result in the dual case \cite[5.4]{Mackenzie:2005dts}. 

In the case of general $n$ we would at this stage have reached a
collection of \nvbs{(n-2)} and can proceed by induction. At each stage
we are taking the duals of double vector bundles, as described in
\cite{Gracia-SazM:2009} and elsewhere. The only task is to ensure
consistency. 

We evidently have an \nvb{n} but it is not presented as Definition \ref{df:nvb} 
requires. For this we need to allow index sets other than $\hop{n}$. 
Write $[n] := \{0,1, \ldots, n\}$ and for $i\in\hop{n}$ 
write $[n,i]$ for the set $[n] \setminus \{i\}$.  

The bundles appearing in the structure of $E_{\hop{n}}\duer E_{\hop{n}\setminus i}$ 
are of the form $E_{R, H}\duer E_H$ where $R$ is a run
across $\hop{n}$ and $H$ is a hop across $\hop{n}$. Let $R'$ denote the
run across $[n]$ such that $R, H, R'$ is a complete hop across $[n]$
and write
\begin{equation}
\label{eq:ERH}
E_{R', H} := E_{R, H}\duer E_H  
\end{equation}
(we write $E_{R', H}$ or $E_{H, R'}$ as convenient). For example, for the 
space $E_{13,2,4}\duer E_{2,4}$ in Figure~\ref{fig:shard}, we have $R' = 0$ and so 
$E_{13,2,4}\duer E_{2,4} = E_{0,2,4}.$

With this renaming, $E_{\hop{n}}\duer E_{\hop{n}\setminus i}$ satisfies Definition 
\ref{df:nvb} with the index set now $[n,i]$. In particular, applying this notation 
to Figure \ref{fig:shard} 
shows that $E_{1,2,3,4}\duer E_{2,3,4}$ is a \nvb{4} with index set $\{0, 2, 3, 4\}$. 

Equation (\ref{eq:ERH}) effectively defines $E_{H'}$ for every hop $H'$ across $[n]$. 
Namely, write $H' = R', H$ where $R'$ is the element of $H'$ with $0\in\in R'$ (if
there is no such element, then $E_{H'}$ is already defined) and $H$ is a hop
across $\hop{n}$, possibly empty. Then read (\ref{eq:ERH}) as defining 
$E_{R, H}$. 
Finally, set $E_{[n]} : = E_{\emptyset} = M$.

Let $S_{n+1}$ be the group of permutations of $[n]$. Write $\lambda_i$ for the
transposition $(0\,i)$ in $S_{n+1}$.  Then $\lambda_i$ defines a bijection 
between $[n,0] = \hop{n}$ and $[n,i]$, and therefore induces a bijection between 
the hops across $\hop{n}$ and the hops across $[n,i]$; we denote this bijection
also by $\lambda_i$.

Define an \nvb{n} $E^{X_i}$ by the formula 
\begin{equation}
(E^{X_i})_H : = E_{\lambda_i(H)}
\end{equation}
for every pure hop $H$ across $\hop{n}$. 
To summarize:

\begin{thm}  \label{thm:dualdef}
$E^{X_i}$ is a well-defined \nvb{n}. 

The cores of $E^{X_i}$ are given by the same formula 
$(E^{X_i})_H = E_{\lambda_{i} (H)}$ 
where now $H$ is any hop across $\hop{n}$.
\end{thm}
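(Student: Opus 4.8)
The plan is to verify Definition~\ref{df:nvb} for $E^{X_i}$ by transporting the structure of $E_{\hop{n}}\duer E_{\hop{n}\setminus i}$ along the relabelling $\lambda_i$, and then to observe that the same transport works simultaneously for all hops, not just the pure ones, which gives the statement about cores. The essential point is that the preceding discussion has already constructed, as an \nvb{n} with index set $[n,i]$, the dual $E_{\hop{n}}\duer E_{\hop{n}\setminus i}$; call it $F$ for brevity, so $F_{H'} = E_{H'}$ for pure hops $H'$ across $[n,i]$ (using the renaming conventions around~(\ref{eq:ERH})). Since $\lambda_i = (0\,i)$ is a bijection $\hop{n}\to[n,i]$, it carries pure hops across $\hop{n}$ to pure hops across $[n,i]$, carries the condition ``$j\notin\in H$'' to ``$\lambda_i(j)\notin\in\lambda_i(H)$'', and is compatible with the operations $H\mapsto H,k$. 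Hence setting $(E^{X_i})_H := F_{\lambda_i(H)} = E_{\lambda_i(H)}$, every bundle projection $q^{H,j}_H$ and every double vector bundle square required of $E^{X_i}$ is literally the corresponding datum of $F$ read through $\lambda_i$. Since $F$ satisfies the definition with index set $[n,i]$, $E^{X_i}$ satisfies it with index set $\hop{n}$. This is the first step.

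Second, I would pin down that $\lambda_i$ really does what is claimed on the combinatorics of hops. This is routine but needs stating: a pure hop is a set of singletons, and $\lambda_i$ applied elementwise sends singletons to singletons and preserves disjointness, so it is a bijection between pure hops across $\hop{n}$ and pure hops across $[n,i]$; likewise it is a bijection on all hops, on runs, and it commutes with $\ph{-}$, $\run{-}$, with the operations $H\backslash k$, and with the relations ``together''/``separate''. Because the double vector bundle axiom in Definition~\ref{df:nvb} is phrased entirely in terms of these combinatorial relations together with the projections $q$, the fact that $F$ is an \nvb{n} over $[n,i]$ transports verbatim.

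Third, for the statement about cores: the cores $E_{R,H}$ of $E$ (and of $F$) were defined, in the subsection on cores, purely from the double vector bundle substructures whose total space is the relevant $E_{H'}$, by the combinatorial rule ``combine the two side indices, leave the base hop unchanged.'' Since this construction is natural with respect to isomorphisms of the ambient multiple structure, and $\lambda_i$ is precisely such a relabelling isomorphism from $F$ to $E^{X_i}$, the core of $E^{X_i}$ indexed by a hop $H$ is $F$'s core indexed by $\lambda_i(H)$, i.e. $(E^{X_i})_H = E_{\lambda_i(H)}$ for arbitrary hops $H$ across $\hop{n}$. Concretely one checks that $\lambda_i$ is order-preserving enough that the inductive construction of the ultracore and of the intermediate cores (the $E_{ij,H}$, etc.) commutes with it at each stage.

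The main obstacle is not any single computation but the bookkeeping around~(\ref{eq:ERH}): the space $E_{R,H}\duer E_H$ is defined only after one knows it carries an \nvb{} structure, and the discussion establishing this (Figures~\ref{fig:topij}--\ref{fig:shard} for $n=4$, then ``induction'' in general) is where the genuine content lies; once that is granted, $\lambda_i$ is just a dictionary. So the theorem as stated is essentially a matter of checking that the already-constructed object $E_{\hop{n}}\duer E_{\hop{n}\setminus i}$, after the index-set renaming $[n,i]\rightsquigarrow\hop{n}$ via $\lambda_i$, meets Definition~\ref{df:nvb} on the nose --- and I would present it as such, with the combinatorial lemma about $\lambda_i$ and hops stated explicitly so that the core formula follows for all hops, not merely pure ones. $\square$
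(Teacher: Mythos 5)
Your proposal is correct and follows essentially the same route as the paper, which states this theorem as a summary (``To summarize:'') of the preceding construction: the iterated double-vector-bundle dualizations and consistency checks establish $E_{\hop{n}}\duer E_{\hop{n}\setminus i}$ as an \nvb{n} with index set $[n,i]$, and the relabelling by $\lambda_i$, together with the renaming convention (\ref{eq:ERH}), then yields both the well-definedness and the core formula for arbitrary hops. Your explicit remark that the genuine content lies in that prior construction, with $\lambda_i$ acting only as a dictionary, accurately reflects how the paper itself treats the statement.
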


We call $E^{X_i}$ the \emph{$X_i$-dual of $E$} or the \emph{$i$-dual of $E$}.  

Notice that if we only want to describe the manifolds that appear in the definition 
of $E^{X_i}$, then they are entirely determined by the permutation 
$\lambda_i = (0\,i) \in S_{n+1}$.  In particular $\lambda_i$ permutes the vector 
bundles $E_1, \ldots, E_n$ and the dual $E_0$ of the ultracore.

\section{The duality functor groups}
\label{sect:dfg}

We have defined the action of dualization on multiple vector bundles. The reader
will appreciate however, that composing several dualizations using this formulation
leads to unwieldy expressions which are difficult to recognize. 
Further, as \cite{Gracia-SazM:2009} showed, it is actually not possible to tell from 
diagrams of 
the outlines whether two multiple vector bundles are canonically isomorphic (as defined 
below). We therefore need to extend the techniques of \cite{Gracia-SazM:2009} to the
$n$-fold case. The key idea is to consider the effect of dualization not only on the
multiple vector bundles but on the maps between them. This is a simple idea, but it
is worth drawing attention to, since the role it plays does not arise for duality
of ordinary vector bundles. 

To be able to dualize, in any direction, a map between two $n$-fold vector bundles,
it is necessary that the dualizations take place over maps which are isomorphisms. 
We ensure this by considering only statomorphisms. 

\subsection*{Statomorphism categories}
\addcontentsline{toc}{subsection}{\protect\numberline{}{Statomorphism categories}}

Let $\C_n$ be the category whose objects are $n$-fold vector bundles 
and whose morphisms are statomorphisms of $n$-fold vector bundles.    
Let the opposite category be denoted by $\C_n^{op}$ or $\C_n^{-1}$.  
The dualization operators $X_k$ defined above extend in a natural way to
functors $\C_n \to \C_{n}^{op}$, also denoted $X_k$. Since every morphism 
in $\C_n$ is invertible, we can also consider them as functors 
$X_k\co \C_n^{op} \to \C_n$.  It is therefore possible to compose such 
functors. Write $\wg_n$ for the group generated by the $X_1,\dots,X_n$. 

As noted after Theorem \ref{thm:dualdef}, every dualization functor produces a 
permutation of the vector bundles $E_1, \ldots, E_n, E_0$.  
Thus there is a surjective group homomorphism $\pi\co  \wg_n \to S_{n+1}$.

\subsection*{Decomposed $n$-fold vector bundles}
\addcontentsline{toc}{subsection}{\protect\numberline{}{Decomposed $n$-fold vector bundles}}

Recall that, for an \nvb{n} $E$, the decomposed form of $E$, constructed as in 
Definition \ref{defin:bb}, is denoted $\decom{E}$. 

\begin{df}
\label{df:decomp}
A \emph{decomposition} of an \nvb{n} $E$ is a statomorphism onto its decomposed 
form $E \to \decom{E}$.
\end{df}

Grabowski and Rotkiewicz \cite{GrabowskiR:2009} proved that every \nvb{n} has a
decomposition. Rather than consider duality operators on an arbitrary \nvb{n}, 
it is therefore sufficient to consider the decomposed case. 

This situation is analogous to that of trivializations of an ordinary vector bundle.
If a vector bundle is, say, flat and on a simply-connected base, then a trivialization
exists, but is not unique (assuming that the rank is positive). Rather than consider
the group of automorphisms of the vector bundle itself, one considers the group of
automorphisms of a trivial vector bundle of the same rank. For multiple vector bundles
a decomposition always exists, but is not unique, and we consider automorphisms not
of the given multiple vector bundle, but of its decomposed form. 


Write elements $e\in \decom{E}$ in the form $e = e_I$ where $I$ ranges over all 
non-empty subsets of $\hop{n}$. 

A statomorphism $\decom{E}\to\decom{E}$ consists of a set of multilinear maps, which 
we now describe. For ease of
notation, we write their domains as tensor products, but their arguments as strings of vectors.

Throughout the rest of the paper, 
`statomorphism' always refers to a statomorphism which is an automorphism of a 
decomposed \nvb{n}. Recall the case $n = 3$ from \cite{Gracia-SazM:2009}. 

\begin{ej}
\label{ej:stat3}
Let $E$ be a triple vector bundle. The elements of $\decom{E}$ are 
strings \newline
$(e_1, e_2, e_3, e_{12}, e_{23}, e_{31}, e_{123})$
with $e_I\in E_I$ for each nonempty $I\subseteq\{1,2,3\}$. 

A statomorphism $\phi\co \decom{E}\to\decom{E}$ is of the form 
\begin{multline*}
\phi(e_1, e_2, e_3, e_{12}, e_{23}, e_{31}, e_{123}) = 
(e_1, e_2, e_3, \\
e_{12} +\phi_{12}(e_1, e_2), e_{23}+\phi_{23}(e_2, e_3), e_{31}+\phi_{31}(e_3, e_1), \\
e_{123} + \phi_{12,3}(e_{12}, e_3) + \phi_{23,1}(e_{23}, e_1) + \phi_{31,2}(e_{31}, e_2) + \phi_{123}(e_1, e_2, e_3)) 
\end{multline*}
where $\phi_{ij}\co E_i\otimes E_j\to E_{ij}$, $\phi_{ij,k}\co E_{ij}\otimes E_k\to E_{ijk}$, and
$\phi_{123}\co E_1\otimes E_2\otimes E_3\to E_{123}$ are multilinear maps. 

These $\phi_I$ can also be written as elements of 
$E_1\otimes E_2\otimes E_{12}^*$, $E_2\otimes E_3\otimes E_{23}^*,$ \dots, 
$E_1\otimes E_2\otimes E_{3}\otimes E_{123}^*$, 

From (\ref{eq:ERH}) it follows that for any $I \subseteq \hop{n}$, we 
have $E_{I}^* : = E_{I^C}$, where $I^C : = [n] \setminus I$.

The seven tensor products can therefore be written as 
$E_1\otimes E_2\otimes E_{03}$, $E_2\otimes E_3\otimes E_{01},$ \dots, 
$E_1\otimes E_2\otimes E_{3}\otimes E_0$, These correspond to the seven partitions
of the set $\{0,1,2,3\}$ into three or more subsets. 
\end{ej}

In general, a statomorphism adds a term to $e_I$ for each partition of $I$ 
into two or more nonempty sets. We now formalize this statement. 

A partition of $[n]$ is the same as a complete run across $[n]$, and 
we will continue to use the notation set up in \S\ref{sect:not} for runs, 
For every positive integer $n$, denote by $\parti{n}$ the set of all 
partitions of $[n]$ into three or more subsets.  
The number of ways in which $[n]$ can be partitioned into $k\geq 2$ subsets is
the Stirling number of the second kind \cite{Knuth:1}, 
$$
\stsec{n+1}{k} = \frac{1}{k!}\sum_{j=1}^k (-1)^{k-j}\binom{k}{j}j^{n+1}.
$$
Thus the number of components in a statomorphism of $\Bar{E}$ is 
$
\sum_{k=3}^{n+1} \stsec{n+1}{k}. 
$ 
The values for $n = 3,4,5,6$ are $7$, $36$, $171$ and $813$. 

For any $P \in \parti{n}$, define 
\begin{equation}
\label{eq:VP}
\V{P} : = \Gamma \left( \bigotimes_{I \in P} E_I^{*} \right)
\end{equation}

Elements of a given $\V{P}$ can be interpreted in various ways. 
For example, consider $\phi \in \V{012,3,4}$. This $\phi$ is a section 
of $E_{012}^* \otimes E_{3}^* \otimes E_{4}^*$, or
a map from $E_{012} \otimes E_3 \otimes E_4$ to the real line bundle. 
However, since $E_{012} = E_{34}^*$, we can also regard $\phi$ as a map from 
$E_3 \otimes E_4$ to $E_{34}$.  Similarly, we can also think of $\phi$ as a map from 
$E_{3}$ to $E_{34} \otimes E_{0123}$, and so on. We will use the same letter to refer 
to all these maps, according to which is most convenient at a particular time.   
For instance, given elements $e_I \in E_I$ for each run $I$, if
we write $\phi(e_3, e_4)$, it should be understood that we are thinking of 
$\phi$ as $E_3 \otimes E_4 \to E_{34}$ and that $\phi(e_3, e_4) \in E_{34}$.

\begin{df}
\label{df:ps}
The \emph{parameter space}\index{parameter space} 
$\G{\BE}$ of the set of building bundles $\BE$ is defined as
$$
\G{\BE} : = \bigoplus_{P \in \parti{n}} \V{P}
$$
\end{df}

We will see shortly that $\G{\BE}$ can be canonically identified with the group of 
statomorphisms $\Dec \decom{E}$ and, moreover, with the group of statomorphisms 
$\Dec \decom{E}^W$ for any dualization functor $W$. 

Take a word $W$ in $X_1,\dots, X_n$; that is, $W\in\wg_n$ and write 
$k : = \pi(W) (0)$, where $\pi$ is the surjection $\wg_n\to S_{n+1}$. 
Then 
\begin{equation*}
\decom{E}^W = \decom{E^W} = \bigoplus_{\substack{R \textrm{ a run} \\ \textrm{across } [n,k] }}  E_R
\end{equation*}
Write elements $e \in \decom{E}^W$ as tuples $(e_R)$ where $e_R \in E_R$ for each run $R$.  
Now define a map
\begin{equation}  
\label{eq:defgk}
\Omega^k \co \G{\BE} \longrightarrow  \Dec \decom{E}^W. 
\end{equation}  
Take $\phi \in \G{\BE}$ and write $\phi = (\phi_P)$ where $P$ runs through $\parti{n}$. 
Take $e \in \decom{E}^W$ and define $f : = \Omega^k(\phi) (e)$ by the equations:
\begin{equation}  \label{eq:stato}
f_R \; = \; e_R + \; \sum \phi_P(e_{J_1}, \ldots, e_{J_m})
\end{equation}
for each run $R$ across $[n,k]$. The sum is over all $P \in \parti{n}$ such 
that $R^c \in P$ and the $J_i$ are the remaining elements of $P$; that is, 
$P = \{R^c, J_1, \ldots, J_m\}$. The proof of the following is now a matter of unwinding the
statement. 

\begin{thm}  \label{thm:statogroup}
Let $W \in \wg_n$.  Then the map $\Omega^k$ defined in \eqref{eq:stato} is a well-defined, 
canonical bijection between the set $\G{\BE}$ and the group $\Dec \decom{E}^W$. 
\end{thm}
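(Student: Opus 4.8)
The plan is to reduce the theorem to one structural fact — an explicit normal form for statomorphisms of a \emph{decomposed} $n$-fold vector bundle — and then to match the combinatorics of that normal form with the direct sum defining $\G{\BE}$.

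\textbf{Step 1 (normal form for statomorphisms).} First I would establish, by induction on $n$, the following generalization of Example~\ref{ej:stat3}. Let $\decom{F}$ be any decomposed $n$-fold vector bundle with index set $I_0$ (an $n$-element set; in our application $I_0=[n,k]$ with $k=\pi(W)(0)$), and write its elements as tuples $(e_R)$ over runs $R$ across $I_0$. Then a statomorphism $\psi\co\decom{F}\to\decom{F}$ is given uniquely and freely by a family of multilinear maps
\[
\psi_{R;J_1,\dots,J_m}\co F_{J_1}\otimes\cdots\otimes F_{J_m}\longrightarrow F_R,
\]
one for each run $R$ across $I_0$ and each partition $\{J_1,\dots,J_m\}$ of $R$ into $m\geq 2$ parts, with $\psi$ acting by $\psi(e)_R=e_R+\sum\psi_{R;J_1,\dots,J_m}(e_{J_1},\dots,e_{J_m})$ (sum over all such partitions of $R$). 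The inductive step uses the recursive description of an $n$-fold vector bundle as a vector bundle object in the category of $(n-1)$-fold vector bundles given in \S\ref{sect:not}: a statomorphism restricts to statomorphisms of the $(n-1)$-fold substructures obtained by fixing, for a chosen index $j$, whether $j\in\in R$, so by induction those pieces already have this form; compatibility with the $j$-th vector bundle structure, together with the elementary fact that an automorphism of a Whitney sum vector bundle which is the identity on one summand is unipotent and multilinear in the remaining components, forces the ``interaction'' terms into the same form and rules out contributions from any partitions other than those listed. The converse (freeness) is direct: any such family defines a morphism compatible with all the structures, and a statomorphism is automatically an isomorphism (the remark after Definition~\ref{defin:stato}), so the family genuinely parametrizes the whole group $\Dec\decom{F}$.

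\textbf{Step 2 (matching indices).} Now take $\decom{F}=\decom{E}^W=\decom{E^W}$, which by the formula stated before \eqref{eq:defgk} is decomposed with index set $[n,k]$ and building bundles $E_R$ for runs $R$ across $[n,k]$. For such an $R$ put $R^c:=[n]\setminus R$; since $k\in R^c$, the map $\{J_1,\dots,J_m\}\mapsto P:=\{R^c,J_1,\dots,J_m\}$ is a bijection from partitions of $R$ into $m\geq 2$ parts onto partitions $P$ of $[n]$ into $m+1\geq 3$ parts whose unique block containing $k$ is $R^c$; letting $R$ range over all runs across $[n,k]$ makes $P$ range over all of $\parti{n}$ exactly once. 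Under this bijection the datum $\psi_{R;J_1,\dots,J_m}\co E_{J_1}\otimes\cdots\otimes E_{J_m}\to E_R$ has the same content as an element of $\V{P}=\Gamma\bigl(\bigotimes_{I\in P}E_I^*\bigr)$: the $E_{J_i}$ and $E_R$ are building bundles of $\decom{E^W}$, and by the identity $E_I^*=E_{I^C}$ recorded after Example~\ref{ej:stat3} (valid for $I\subseteq[n]$ through the renaming \eqref{eq:ERH}) one has $E_{R^c}^*=E_R$, so $\Gamma\bigl(E_{R^c}^*\otimes\bigotimes_i E_{J_i}^*\bigr)$ is canonically the space of multilinear maps $\bigotimes_i E_{J_i}\to E_R$. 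Comparing with \eqref{eq:stato}, $\Omega^k$ sends $\phi=(\phi_P)_{P\in\parti{n}}$ to exactly the statomorphism whose $(R;J_1,\dots,J_m)$-component is $\phi_P$, where $P$ is the partition with $R^c\in P$; by Step~1 this is well-defined (the image really is a statomorphism) and a bijection $\G{\BE}\to\Dec\decom{E}^W$. It depends on $W$ only through $k=\pi(W)(0)$, and each identification invoked — duality of the building bundles, the core and ultracore bundles of $\decom{E^W}$ expressed through $\BE$ as in Example~\ref{ej:dec}, and the normal form of Step~1 — is canonical; this is what ``canonical'' in the statement means.

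\textbf{Main obstacle.} The real work is Step~1: establishing the precise triangular--multilinear normal form for statomorphisms of a decomposed $n$-fold vector bundle, with components that are simultaneously free and indexed exactly by partitions of runs into $\geq 2$ parts. This is the $n$-fold generalization of the $n=3$ computation of \cite{Gracia-SazM:2009} recalled in Example~\ref{ej:stat3}, and is the ``unwinding'' referred to just before the theorem; the induction over deleted directions has to be organized so that the count of contributing partitions comes out exactly right, with no double counting and no missing terms. Once this is in hand, Step~2 is a purely combinatorial identification of the two indexing sets together with the routine check that the tensor-interpretations of the components agree.
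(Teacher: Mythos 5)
Your proposal is correct and follows essentially the route the paper intends: the paper asserts the triangular multilinear normal form for statomorphisms of a decomposed \nvb{n} (the generalization of Example~\ref{ej:stat3}, counted by $\sum_{k\geq 3}\stsec{n+1}{k}$) and then declares the theorem ``a matter of unwinding the statement,'' which is exactly your Step~2 matching of partitions of runs across $[n,k]$ with $\parti{n}$ via the block $R^c\ni k$ and the duality $E_{R^C}=E_R^*$ fixed in the setup. Your Step~1 induction merely supplies detail for the normal form that the paper states without proof, so there is no substantive divergence.
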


For the case $n=3$, see Section 4 of \cite{Gracia-SazM:2009}. The treatment here
corrects some problems with the account in \cite{Gracia-SazM:2009}. In what follows we 
will not need the group structures on the various $\Dec \decom{E}^W$. 

\subsection*{The basic theorem} 
\addcontentsline{toc}{subsection}{\protect\numberline{}{The main theorems}}

We now give the result which is the foundation of the subsequent calculations, Theorem
\ref{thm:W=1}: an element of
$\wg_n$ which induces the identity element of $S_{n+1}$ is naturally isomorphic to the 
identity under statomorphisms if and only if it acts trivially on $\G{\BE}$. 
This provides a concrete and explicit calculation that determines when two dualization 
functors are naturally isomorphic.  

To begin, for each run $R \subseteq [n]$ let $E_R \to M$ be a vector bundle, subject to the conditions $E_{\emptyset} = M$ and $E_{R^C} = E_R^*$.  
We fix this data for the rest of the section.  

Denote by $\BE$ the set of bundles $\{ E_I \; \vert \; I \subseteq  \hop{n} \}$ and pick an \nvb{n} $E$ which has $\BE$ as its building bundles.   
As usual we denote by $\decom{E}$ the decomposed form of $E$.  (Of course $\decom{E}$ can be constructed from  $\BE$ and vice versa.)   
Take $W \in \wg_n$ and write $k : = \pi(W)(0)$.  The decomposed form of $E^W$ is $\decom {E^W} = \decom{E}^W$.  
We will denote the building bundles of $E^W$ by $\BE^W$.  Notice that, as a set, $\BE^W =\{  E_I \; \vert \; I \subseteq [n,k] \}$.  
Finally, let $\varepsilon_W$ be $+1$ or $-1$  depending on the parity of $\pi(W)$.

We now define the crucial action.   Let $\phi \in \G{\BE}$ and pick a decomposition $S_1$ of $E$.  
Then $S_2 : = S_1 \circ \Omega^0(\phi)$ is another decomposition of $E$, with $\Omega^0$ as defined in \eqref{eq:stato}.    
Notice that $(S_1^W)^{\varepsilon_W}$ and $(S_2^W)^{\varepsilon_W}$ are decompositions of $E^W$.  
Hence there is a unique $\psi \in \G{\BE}$ such that $S_2^{\varepsilon_W} = \Omega^k(\psi) \circ S_1^{\varepsilon_W}$.   
We now have the situation shown in Figure~\ref{fig:commdiag}. 

\begin{figure}[h] 
$$
\xymatrix{
E \ar[r]^{S_1} \ar[rd]_{S_2} & \decom{E} \ar[d]^{\Omega^0(\phi)}  & & \decom{E^W}  \ar[d]_{\Omega^k(\psi)} 
  & E^W \ar[l]_{(S_1^W)^{\varepsilon_W}}   \ar[dl]^{(S_2^W)^{\varepsilon_W}}
  \\
& \decom{E} 
& & \decom{E^W  }
}
$$
\caption{\label{fig:commdiag}}
\end{figure}

Then we define the map
$\theta^{\BE}_W : \G{\BE} \to \G{\BE}$ by  $\theta^{\BE}_W (\phi) : = \psi$. 
We have the following results:
\begin{thm} \
\begin{enumerate}
\item  The map $\theta^{\BE}_W$ is well defined.  Specifically, $\theta^{\BE}_W(\phi)$ depends only on the building bundles $\BE$, on $W$ and 
on $\phi \in \G{\BE}$; it does not depend on the choice of \nvb{n} $E$ or on the choice of decomposition $S_1$.
\item  If $W_1, W_2 \in \wg_n$, then  
\begin{equation} \label{eq:quasiaction}
\theta^{(\BE^{W_1})}_{W_2} \circ \theta^{\BE}_{W_1} = \theta^{\BE}_{W_2 W_1}.
\end{equation}
\end{enumerate}
\end{thm}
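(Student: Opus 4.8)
\textbf{Proof plan for the two properties of $\theta^{\BE}_W$.}

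The plan is to prove part (1) in two sub-steps, matching the two choices that appear in the construction. First I would fix the building bundles $\BE$, the word $W$, and $\phi$, and show independence of the decomposition $S_1$. Suppose $S_1'$ is another decomposition of $E$. Then $S_1' \circ S_1^{-1}$ is a statomorphism of $\decom{E}$, so by Theorem \ref{thm:statogroup} it equals $\Omega^0(\chi)$ for a unique $\chi \in \G{\BE}$, and similarly $(S_1')^W \circ (S_1^W)^{-1} = \Omega^k(\chi')$ for a unique $\chi' \in \G{\BE}$. I would insert these into the defining equation $S_2^{\varepsilon_W} = \Omega^k(\psi)\circ S_1^{\varepsilon_W}$ (written for both $S_1$ and $S_1'$) and use that $\Omega^0, \Omega^k$ are the bijections of Theorem \ref{thm:statogroup}; the key point is that $\chi$ and $\chi'$ are \emph{the same} element of $\G{\BE}$, because the composite $S_1' \circ S_1^{-1}$ dualizes, in the sense of $\wg_n$, to $(S_1')^W \circ (S_1^W)^{-1}$ — this is precisely what it means for $W$ to be a functor on the statomorphism category $\C_n$. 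Granting that, a short diagram chase in Figure~\ref{fig:commdiag} shows the $\psi$ obtained from $S_1'$ coincides with that obtained from $S_1$. Independence of the chosen \nvb{n} $E$ (as opposed to its decomposed form) is then immediate, since a decomposition identifies $E$ with $\decom{E}$ and all the maps in the construction factor through decompositions; any two choices of $E$ with building bundles $\BE$ have decomposed forms that are literally equal, and the whole construction only ever sees $\decom{E}$ and $\decom{E}^W = \decom{E^W}$.

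For part (2), the cocycle-type identity \eqref{eq:quasiaction}, I would set up three copies of the square in Figure~\ref{fig:commdiag}: one for $W_1$ acting on $\BE$, one for $W_2$ acting on $\BE^{W_1}$, and one for $W_2 W_1$ acting on $\BE$. Start with a decomposition $S_1$ of $E$ and put $S_2 := S_1 \circ \Omega^0(\phi)$. Applying $W_1$ (with the sign $\varepsilon_{W_1}$) produces, by definition, $\psi_1 := \theta^{\BE}_{W_1}(\phi)$ with $S_2^{\varepsilon_{W_1}} = \Omega^{k_1}(\psi_1)\circ S_1^{\varepsilon_{W_1}}$, where $k_1 = \pi(W_1)(0)$; here $T_1 := S_1^{\varepsilon_{W_1}}$ and $T_1 \circ \Omega^{k_1}(\psi_1)^{-1}$... more conveniently, I would note that $\Omega^{k_1}(\psi_1)$ is exactly the transition $S_2^{\varepsilon_{W_1}} \circ (S_1^{\varepsilon_{W_1}})^{-1}$ between two decompositions of $E^{W_1}$. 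Now apply $W_2$ to this data: by functoriality of $W_2$ on $\C_n$ (and the composition-of-signs rule $\varepsilon_{W_2 W_1} = \varepsilon_{W_2}\varepsilon_{W_1}$), the $W_2$-dual of the transition $\Omega^{k_1}(\psi_1)$ is the transition appearing in the $W_2$-square over $\BE^{W_1}$, whose associated parameter is by definition $\theta^{(\BE^{W_1})}_{W_2}(\psi_1)$. On the other hand, composing the two dualization steps is the same as applying $W_2 W_1$ in one go — because $W \mapsto (\text{apply } W)$ is a group homomorphism $\wg_n \to$ (functors), which is how $\wg_n$ was defined — so the same transition carries the parameter $\theta^{\BE}_{W_2 W_1}(\phi)$. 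Equating the two descriptions of this single transition and invoking the injectivity half of Theorem \ref{thm:statogroup} gives $\theta^{(\BE^{W_1})}_{W_2}(\theta^{\BE}_{W_1}(\phi)) = \theta^{\BE}_{W_2 W_1}(\phi)$, which is \eqref{eq:quasiaction}.

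The routine parts are the diagram chases and the bookkeeping of the base-index $k = \pi(W)(0)$ and the signs $\varepsilon_W$; these unwind mechanically once the framework is in place. The main obstacle is the step I flagged as ``functoriality of $W$ on $\C_n$'': I need that dualization, applied to a statomorphism viewed as a morphism in $\C_n$ (and inverted or not according to parity), is compatible with the identification of statomorphisms of $\decom{E}^W$ with $\G{\BE}$ via $\Omega^k$ — in other words, that $\Omega^k$ intertwines the dualization of statomorphisms with a map $\G{\BE}\to\G{\BE}$ at all, and that this is the map we have called $\theta^{\BE}_W$. Making this precise requires going back to the explicit formula \eqref{eq:stato} for $\Omega^k$ and checking that dualizing a decomposed \nvb{n} sends the component $\phi_P$ of a statomorphism to a component indexed by the \emph{same} partition $P$ of $[n]$ (only the distinguished block $R^c$ changes role), so that $\G{\BE}$ is genuinely preserved and the actions compose; this is where the partition-indexed description of Definition~\ref{df:ps} does the real work, and it is the one point where I expect to need a genuine computation rather than formal nonsense.
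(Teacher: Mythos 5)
Your part (2) is essentially correct and is in fact the paper's own argument: the paper dismisses \eqref{eq:quasiaction} as following ``from the definition, keeping in mind that $\Dec \decom E$ and $\Dec \decom E^W$ are groups'', which amounts to your stacking of the squares of Figure~\ref{fig:commdiag} together with the homomorphism property of $W \mapsto (\text{apply } W)$ and the sign rule $\varepsilon_{W_2W_1}=\varepsilon_{W_2}\varepsilon_{W_1}$.

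Part (1), however, contains a genuine error. Your key claim --- that the parameter $\chi$ of the transition $S_1'\circ S_1^{-1}=\Omega^0(\chi)$ equals the parameter $\chi'$ of the dualized transition $(S_1')^W\circ (S_1^W)^{-1}=\Omega^k(\chi')$ --- is false. Functoriality of $W$ only says that the dualized transition is the $W$-dual of $\Omega^0(\chi)$ (inverted when $\varepsilon_W=-1$); reading that statomorphism of $\decom{E}^W$ back into $\G{\BE}$ goes through $\Omega^k$, not $\Omega^0$, and the parameter so obtained is by definition $\theta^{\BE}_W(\chi)$, not $\chi$. Indeed, if $\chi'=\chi$ held for every transition between decompositions, you could apply it to the transition $\Omega^0(\phi)$ between $S_1$ and $S_2$ itself and conclude $\theta^{\BE}_W=\id$ (up to the sign convention), contradicting the explicit computations: already $2$-tomos acquire signs (Example~\ref{ej:X3}), and higher tomos acquire $\comp$-product correction terms as in \eqref{eq:Xkaction2}. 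The correct repair is that you need not compare $\chi$ and $\chi'$ at all: from the defining equation, $\Omega^k(\psi)=S_2^{\varepsilon_W}\circ\bigl(S_1^{\varepsilon_W}\bigr)^{-1}=\bigl(\Omega^0(\phi)^W\bigr)^{\varepsilon_W}$, so the decomposition $S_1$ (and with it the choice of $E$) cancels, and $\psi$ depends only on the $W$-dual of the statomorphism $\Omega^0(\phi)$ of $\decom{E}$, hence only on $\BE$, $W$ and $\phi$. Even with this fix, what remains --- and what you rightly flag at the end as needing ``a genuine computation'' --- is to verify that this dual is again a statomorphism of the decomposed dual and to identify its parameter in $\G{\BE}$; that computation, via the pairing \eqref{eq:pairing}, is precisely how the paper proves part (1), in Lemma~\ref{lem:thetaX}. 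Note also that your expectation for that computation is off: the dual of a component $\phi_P$ does not simply reappear at the same partition $P$; the action introduces signs and mixes in $\comp$-products of lower tomos, as \eqref{eq:Xkaction2} and the $n=4$ examples show.
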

The proof follows the same pattern as for the triple case \cite{Gracia-SazM:2009}. 
The first part is proved by a direct computation in Section \ref{sec:thetaX}.  
The second part follows from the definition, 
keeping in mind that $\Dec \decom E$ and $\Dec \decom E^W$ are groups.  

Equation \eqref{eq:quasiaction} 
can be interpreted as a groupoid action, but we will not do so here.

Next, notice that the set of building bundles $\BE^W$ depends only on $\BE$ and $\pi(W)$.    
In particular, if $\pi(W)$ is the identity, then $\BE^W = \BE$.  Define
\begin{equation}
\label{eq:ktilde}
\Tilde{K}_{n+1}:= \{W\in\wg_n\;\vert \; \pi(X) = \id\}.  
\end{equation}
We can therefore define an action 
$$
\theta\co  \Tilde{K}_{n+1} \times \G{\BE} \longrightarrow \G{\BE}\qquad\text{by}\qquad
\theta_W (\phi) : = \theta_W^{\BE} (\phi).
$$ 
This action, as anticipated, is faithful, as given by the next theorem.

\begin{thm}
\label{thm:W=1}
Let $W_1, W_2 \in \wg_n$ such that $\pi(W_1) = \pi(W_2)$.  Then the functors $W_1$ and $W_2$ 
are naturally isomorphic to each other if and only if 
$\theta^{\BE}_{W_1} = \theta^{\BE}_{W_2}$.  
\end{thm}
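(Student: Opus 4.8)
The plan is to reduce everything to the action $\theta$ on the parameter space $\G{\BE}$ and then argue in both directions. First recall the setup: two dualization functors $W_1, W_2 \in \wg_n$ with $\pi(W_1) = \pi(W_2)$ are \emph{naturally isomorphic} if for every \nvb{n} $E$ there is a statomorphism between $E^{W_1}$ and $E^{W_2}$ natural in $E$; since $\BE^{W_1} = \BE^{W_2}$ (these depend only on $\pi(W)$), such a comparison makes sense. The key observation is that a decomposition $S_1$ of $E$ determines, via $(S_1^{W_j})^{\varepsilon_{W_j}}$, a decomposition of each $E^{W_j}$, and hence the two outputs $E^{W_1}$ and $E^{W_2}$ may each be compared to the common decomposed form. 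So a natural isomorphism $E^{W_1} \to E^{W_2}$ is equivalent to a statomorphism of $\decom{E^{W}}$, that is, an element of $\Dec \decom{E}^W \isom \G{\BE}$ by Theorem \ref{thm:statogroup}, depending naturally on the input decomposition.

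Next I would make precise how $\theta^{\BE}_{W_j}$ encodes this. Given the input decomposition $S_1$ twisted by $\Omega^0(\phi)$ to $S_2$, the induced decompositions of $E^{W_j}$ differ by $\Omega^k(\theta^{\BE}_{W_j}(\phi))$. Therefore the candidate comparison statomorphism $E^{W_1}\to E^{W_2}$, read through these decompositions, is exactly $\Omega^k\big(\theta^{\BE}_{W_2}(\phi)\big)\circ \Omega^k\big(\theta^{\BE}_{W_1}(\phi)\big)^{-1}$ for the twist $\phi$; and for $\phi = 0$ it is $\Omega^k\big(\theta^{\BE}_{W_2}(0)\big)\circ\Omega^k\big(\theta^{\BE}_{W_1}(0)\big)^{-1}$. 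One direction is then immediate: if $\theta^{\BE}_{W_1} = \theta^{\BE}_{W_2}$ as maps $\G{\BE}\to\G{\BE}$, then in particular these comparison statomorphisms are the identity for every choice of decomposition, so $W_1$ and $W_2$ agree on decomposed \nvbs{n}; by Grabowski–Rotkiewicz every \nvb{n} has a decomposition, and one checks (as in \cite{Gracia-SazM:2009}) that naturality forces the comparison to be the \emph{same} statomorphism regardless of which decomposition is chosen, giving a natural isomorphism $W_1 \Rightarrow W_2$ on all of $\C_n$.

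For the converse, suppose $W_1$ and $W_2$ are naturally isomorphic; I want $\theta^{\BE}_{W_1}(\phi) = \theta^{\BE}_{W_2}(\phi)$ for every $\phi$. Fix the standard data $\BE$ and a decomposed $E$. The natural isomorphism gives, for each input decomposition $S$ of $E$, a statomorphism $\alpha_S \colon E^{W_1}\to E^{W_2}$, and naturality with respect to the statomorphism $\Omega^0(\phi)\colon \decom E \to \decom E$ (viewed as an automorphism of the \nvb{n} $E$) translates, after applying the dualization functors and chasing the diagram in Figure \ref{fig:commdiag} for both $W_1$ and $W_2$ simultaneously, into the identity $\Omega^k(\theta^{\BE}_{W_2}(\phi)) = \alpha \circ \Omega^k(\theta^{\BE}_{W_1}(\phi))$ where $\alpha$ is the (decomposition-independent) component of the natural isomorphism. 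Taking $\phi = 0$ shows $\alpha = \Omega^k(\theta^{\BE}_{W_2}(0))\circ\Omega^k(\theta^{\BE}_{W_1}(0))^{-1}$; substituting back and using that $\theta^{\BE}_{W_j}$ is an affine map of $\G{\BE}$ (it is a composite of the affine bijections $\Omega$), one concludes $\theta^{\BE}_{W_1}(\phi) = \theta^{\BE}_{W_2}(\phi)$ for all $\phi$.

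The main obstacle I expect is the careful bookkeeping in the converse: one must verify that the statomorphism $\alpha$ extracted from the natural isomorphism really is independent of the chosen input decomposition, and that dualizing the naturality square (which involves $\varepsilon_W$-powers and the functors $X_k\colon \C_n \to \C_n^{op}$) produces exactly the claimed relation between $\theta^{\BE}_{W_1}$ and $\theta^{\BE}_{W_2}$ rather than some twisted version. This is where the precise definition of natural isomorphism between functors into $\C_n^{op}$, together with the quasi-action identity \eqref{eq:quasiaction}, has to be used delicately; the triple case in \cite{Gracia-SazM:2009} is the model, but the general-$n$ index-set manipulations ($[n,k]$ versus $\hop{n}$) require care. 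Once that diagram chase is set up correctly, the rest is formal.
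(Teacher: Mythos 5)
Your forward direction is sound and is the route the paper itself intends (the paper omits the proof, deferring to the double/triple case of \cite{Gracia-SazM:2009}): with $\theta^{\BE}_{W_1}=\theta^{\BE}_{W_2}$, the comparison $((S^{W_2})^{\varepsilon_W})^{-1}\circ(S^{W_1})^{\varepsilon_W}\co E^{W_1}\to E^{W_2}$ is independent of the decomposition $S$, because changing $S$ by $\Omega^0(\phi)$ changes the two induced decompositions by $\Omega^k(\theta^{\BE}_{W_i}(\phi))$, which cancel; one then transports along statomorphisms to get naturality.

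The converse, however, contains a genuine gap. Naturality of $\tau\co W_1\Rightarrow W_2$ with respect to the automorphism $\Omega^0(\phi)$ of $\decom{E}$ gives a \emph{two-sided} relation: if $\alpha$ denotes the component of $\tau$ at $\decom{E}$, then $\alpha\circ\Omega^k(\theta^{\BE}_{W_1}(\phi))=\Omega^k(\theta^{\BE}_{W_2}(\phi))\circ\alpha$, i.e.\ $\Omega^k(\theta^{\BE}_{W_2}(\phi))=\alpha\circ\Omega^k(\theta^{\BE}_{W_1}(\phi))\circ\alpha^{-1}$, not the one-sided identity $\Omega^k(\theta^{\BE}_{W_2}(\phi))=\alpha\circ\Omega^k(\theta^{\BE}_{W_1}(\phi))$ you wrote. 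Your next step then collapses: since $\theta^{\BE}_W(0)=0$ for every $W$, putting $\phi=0$ into the correct relation is vacuous and determines nothing about $\alpha$ (your formula would force $\alpha=\id$, which is unjustified --- the component of a natural isomorphism at $\decom{E}$ need not be the identity a priori). What naturality actually yields is that $\theta^{\BE}_{W_2}(\phi)$ is the conjugate of $\theta^{\BE}_{W_1}(\phi)$ by one fixed statomorphism $\alpha$, uniformly in $\phi$; since $\Dec\decom{E}^W$ is nonabelian for $n\geq 3$, this does not formally give $\theta^{\BE}_{W_1}=\theta^{\BE}_{W_2}$. The missing content of the converse is precisely ruling out a nontrivial conjugator, for instance by exploiting the filtration of the statomorphism group by the number of parts of the partitions: the components indexed by partitions into three parts add under composition (such partitions admit no $\comp$-factorization), so inner automorphisms act trivially there, while by Lemma \ref{lem:thetaX} the discrepancy $\theta^{\BE}_{W_2}\circ(\theta^{\BE}_{W_1})^{-1}$ acts on those components by signs; one must then climb the filtration. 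This issue is invisible in the double case (where the statomorphism group is abelian), which is exactly why the $n\geq 3$ argument needs more than your diagram chase. A smaller point: $\theta^{\BE}_W$ is not an affine map of $\G{\BE}$ --- formula \eqref{eq:Xkaction2} is polynomial of higher degree --- so the appeal to affineness in your last step is also unavailable.
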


The proof of this theorem is identical to the case of double and triple vector bundles 
(see \cite[2.7]{Gracia-SazM:2009}), so we omit it. We can finally define the
group which is the subject of the paper. 

\begin{df}
\label{df:dg}
The group $\dg_n$, called the \emph{dualization functor group for \nvbs{n},} is
the quotient of $\wg_n$ over natural isomorphism in $\C_n$. 
\end{df}

From now on, we write $X_i$ for the element of $\dg_n$ corresponding to the 
dualization operator $X_i$, and so on. 

The morphism $\pi$ descends to a surjective morphism $\dg_n\to S_{n+1}$, also
denoted $\pi$. The kernel is the quotient of $\Tilde{K}_{n+1}$ over natural
isomorphism, and we denote it $K_{n+1}$. 

\begin{cor}
\label{cor:faithfulK}
The restriction of $\theta$ to the group $K_{n+1}$  is faithful on the set $\G{\BE}$. 
\end{cor}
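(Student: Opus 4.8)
The plan is to deduce Corollary~\ref{cor:faithfulK} directly from Theorem~\ref{thm:W=1} together with the definitions of $\dg_n$ and $K_{n+1}$. Recall that $K_{n+1}$ is by construction the quotient of $\Tilde{K}_{n+1}$ over natural isomorphism in $\C_n$, and that $\theta$ is an action of $\Tilde{K}_{n+1}$ on $\G{\BE}$ which descends to $K_{n+1}$ precisely because, by Theorem~\ref{thm:W=1}, $\theta^{\BE}_{W}$ depends only on the natural isomorphism class of $W$ (applied with $W_1 = W$, $W_2 = W'$ two representatives of the same class). So the first step is simply to check that $\theta$ is well defined on $K_{n+1}$: if $W$ and $W'$ represent the same element of $K_{n+1}$ then $W$ and $W'$ are naturally isomorphic, hence by Theorem~\ref{thm:W=1} (note both lie in $\Tilde{K}_{n+1}$, so $\pi(W) = \pi(W') = \id$) we get $\theta^{\BE}_{W} = \theta^{\BE}_{W'}$, and the induced map on $K_{n+1}$ is unambiguous.

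Next I would verify that the induced map $K_{n+1} \to \mathrm{Perm}(\G{\BE})$ is a group homomorphism, which is immediate from the cocycle/action identity \eqref{eq:quasiaction}: since for $W_1, W_2 \in \Tilde{K}_{n+1}$ we have $\BE^{W_1} = \BE$ (because $\pi(W_1) = \id$), equation \eqref{eq:quasiaction} reads $\theta^{\BE}_{W_2} \circ \theta^{\BE}_{W_1} = \theta^{\BE}_{W_2 W_1}$, exactly the homomorphism property for the restricted action $\theta\colon \Tilde{K}_{n+1} \times \G{\BE} \to \G{\BE}$ stated in the excerpt. Passing to the quotient, $\theta$ is a genuine action of the group $K_{n+1}$ on $\G{\BE}$.

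Finally, faithfulness. An element of $K_{n+1}$ acts trivially on $\G{\BE}$ exactly when its representative $W \in \Tilde{K}_{n+1}$ satisfies $\theta^{\BE}_W = \theta^{\BE}_{\id}$ (the identity of $\dg_n$ being represented by the empty word, whose $\theta$ is visibly $\id_{\G{\BE}}$). But by Theorem~\ref{thm:W=1} applied with $W_1 = W$ and $W_2 = \id$ — which is legitimate since $\pi(W) = \id = \pi(\id)$ — the equality $\theta^{\BE}_W = \theta^{\BE}_{\id}$ holds if and only if $W$ is naturally isomorphic to $\id$ in $\C_n$, i.e. if and only if $W$ represents the identity of $\dg_n$, equivalently the identity of $K_{n+1}$. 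Hence the kernel of the action of $K_{n+1}$ on $\G{\BE}$ is trivial, which is precisely the assertion that $\theta$ restricted to $K_{n+1}$ is faithful.

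The corollary is therefore a formal consequence of Theorem~\ref{thm:W=1} and there is no real obstacle; the only point requiring a moment's care is the bookkeeping that $\theta$ descends well to the quotient group $K_{n+1}$ and respects composition, both of which are handed to us by \eqref{eq:quasiaction} and the hypothesis $\pi(W_i) = \id$ on elements of $\Tilde{K}_{n+1}$. In writing this up one would simply cite Theorem~\ref{thm:W=1} for the "if and only if" and remark that the well-definedness and the action property have already been established in the discussion preceding the statement.
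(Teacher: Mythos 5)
Your argument is correct and matches the paper's intent: the corollary is stated there without proof precisely because it is the formal consequence of Theorem~\ref{thm:W=1} (together with the action property \eqref{eq:quasiaction} and the definitions of $\dg_n$ and $K_{n+1}$) that you spell out. Your extra care about well-definedness on the quotient and the homomorphism property is exactly the implicit bookkeeping the paper relies on, so nothing is missing or different in substance.
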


The corollary allows us to identify $K_{n+1}$ with its action on $\G{\BE}$, that is, with 
a subgroup of the symmetric group on $\G{\BE}$. 

We need one final lemma preparatory to our calculation of $K_{n+1}$.  There is an evident
short exact sequence, 
\begin{equation}  
\label{eq:ses}
\xymatrix{1 \ar[r] & K_{n+1} \ar[r] & \dg_n \ar[r]^{\pi}  & S_{n+1} \ar[r] & 1.}
\end{equation}
and we know that $\dg_n$ is generated by $X_1, \ldots, X_n$.  We also know that 
$\pi (X_k) = (0\,k)$ and that the $(0\,k)$ generate $S_{n+1}$. The following lemma is an easy 
exercise in algebra. 

\begin{lem} \label{lem:gener}
Let $\pi\co G \to S$ be a surjective group homomorphism.  Let $g_1,\ldots,g_n$ be a set of 
generators of $G$ and write $\sigma_i:=f(g_i)$.  Let 
$\{ R_j(\sigma_1,\ldots,\sigma_n) \; \vert \; j=1,\ldots,m \}$ be a set of relations for 
a presentation of $S$ with generators $\sigma_1,\ldots,\sigma_n$. 
Then the kernel of $\pi$ is the normal subgroup of $G$ generated by 
$\{ R_j(g_1,\ldots,g_n) \; \vert \; j=1,\ldots,m  \}$.
\end{lem}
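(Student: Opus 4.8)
The statement is a standard fact about presentations: if $G$ is generated by $g_1,\dots,g_n$, $\pi\co G\to S$ is surjective with $\pi(g_i)=\sigma_i$, and $S$ has presentation $\langle \sigma_1,\dots,\sigma_n \mid R_1,\dots,R_m\rangle$, then $\ker\pi$ is the normal closure in $G$ of $\{R_j(g_1,\dots,g_n)\}$. The plan is to argue both inclusions after setting up the right free-group diagram. Let $F$ be the free group on symbols $y_1,\dots,y_n$, let $\alpha\co F\to G$ be the surjection $y_i\mapsto g_i$, and let $\beta\co F\to S$ be the surjection $y_i\mapsto\sigma_i$; then $\beta=\pi\circ\alpha$. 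By hypothesis $\ker\beta$ is the normal closure $N:=\langle\!\langle R_j(y_1,\dots,y_n)\rangle\!\rangle$ in $F$.

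First I would show the easy inclusion: each $R_j(g_1,\dots,g_n)=\alpha(R_j(y_1,\dots,y_n))$ and, since $R_j(\sigma_1,\dots,\sigma_n)=1$ in $S$, we get $\pi(R_j(g_1,\dots,g_n))=\beta(R_j(y_1,\dots,y_n))=1$, so each $R_j(g_1,\dots,g_n)\in\ker\pi$. As $\ker\pi$ is a normal subgroup, it contains the normal closure of these elements. For the reverse inclusion, take $x\in\ker\pi$. Since $\alpha$ is surjective, pick $w\in F$ with $\alpha(w)=x$. Then $\beta(w)=\pi(\alpha(w))=\pi(x)=1$, so $w\in\ker\beta=N$; write $w$ as a product of conjugates $w=\prod_k u_k\, R_{j_k}(y_1,\dots,y_n)^{\pm1}\, u_k^{-1}$ in $F$. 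Applying $\alpha$ and using that $\alpha$ is a homomorphism, $x=\alpha(w)=\prod_k \alpha(u_k)\,R_{j_k}(g_1,\dots,g_n)^{\pm1}\,\alpha(u_k)^{-1}$, which exhibits $x$ as an element of the normal closure of $\{R_j(g_1,\dots,g_n)\}$ in $G$. Combining the two inclusions gives the claim.

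There is essentially no obstacle here — the only point requiring care is making sure the "relations $R_j$" are interpreted consistently as words in a free group that can be evaluated in either $S$ or $G$ via the respective generator assignments, and that "presentation of $S$" is being used in the precise sense that $\ker\beta$ equals the stated normal closure (not merely that the $R_j$ hold). Once that bookkeeping is fixed, both inclusions are one-line verifications. In the write-up I would simply state the free-group setup, then do the two inclusions in the order above.
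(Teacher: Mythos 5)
Your proof is correct; the paper gives no proof at all for this lemma (it is dismissed as ``an easy exercise in algebra''), and your free-group argument with the two inclusions is exactly the standard argument intended. No discrepancies to report.
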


We use the standard presentation of $S_{n+1}$ with generators $\sigma_k : = (0\,k)$
by relations $\sigma_i^2$, $(\sigma_i\sigma_j)^3$, and 
$(\sigma_i\sigma_j\sigma_i\sigma_k)^2$ with $i,j,k \in\hop{n}$ distinct. 
Hence, $K_{n+1}$ is the normal subgroup of $\dg_n$ generated by 
\begin{equation}  \label{eq:kernelwords}
 X_i^2,\, (X_iX_j)^3,\, (X_iX_jX_iX_k)^2 
\end{equation}
for all distinct values $i, j, k\in\hop{n}$. From ordinary duality
and the case $n = 2$ we know that $X_i^2 = (X_iX_j)^3 = 1$. 
In \S\ref{sect:tgk} we will find the order of $K_{n+1}$. 


\section{Calculation of the action $\theta$ of $K_{n+1}$ on $\G{\BE}$}  
\label{sec:thetaX}

In this section we calculate the action of $K_{n+1}$ on $\G{\BE}$. The first step is to
compute 
$\theta_k := \theta_{X_k}^{\BE}$ for $k \in\hop{n}$. We begin by recalling the 
action in the triple case. 

\begin{ej}
\label{ej:X3}
The statomorphism $\phi$ in Example \ref{ej:stat3} consists of seven maps; in
accordance with the notation of (\ref{eq:VP}), denote these by
\begin{equation}
\label{eq:list3}
(1,2,03), \ (2,3,01),\ (3,1,02),\ (12,3,0),\ (23,1,0),\ (31,2,0),\ (1,2,3,0). 
\end{equation}
Here we are suppressing $\phi$, rather as if we were to denote the entries of 
a matrix $(a_{ij})$ just by $ij$. In \cite{Gracia-SazM:2009} these were denoted,
respectively, by $\gamma, \alpha, \beta, \nu, \lambda, \mu, \rho.$

We found $\theta_X$ of these to be, respectively, 
\begin{gather*}
-(2,13,0),\ (2,3,01),\ -(3,12,0),\ -(3,02,1),\ -(23,1,0),\ (31,2,0),\\
 -(0,2,3,1) + (1,2,03)\,(0,3,12) + (1,3,02)\,(0,2,13). 
\end{gather*}
\end{ej}

For clarity, we introduce a neologism and call an element of a $V_P$ 
for some $P\in\parti{n}$, a \emph{tomo}. A tomo is a statomorphism in its
own right and so $\theta_X$ acts upon it. Further, for
$\mu\in V_P$ 
where $P$ is a partition into $m$ subsets, call $\mu$ an \emph{$(m-1)$-tomo.}
Thus $(m-1)$ is the number of commas in the explicit expression of $\mu$. 

Example \ref{ej:X3} illustrates two general features: for a 2-tomo $\mu$, 
any $\theta_X(\mu)$ is $\pm\mu'$ for another 2-tomo
$\mu'$. Secondly, for a $m$-tomo $\mu$ with $m\geq 3$, the formula for
$\theta_X(\mu)$ is a signed sum of products of tomos. 
We now explain this product, which we will denote $\comp$. 

\begin{df}  \label{defin:composition}
Let $P, Q \in \parti{n}$ be two partitions and let $I \subseteq [n]$.  We say that $P$ 
and $Q$ are \emph{compatible through $I$} if $I \in P$ and $I^C \in Q$.

Write $P = \{I, I_1, \ldots, I_r \}$ and $Q = \{ I^C, J_1, \ldots, J_s  \}$.  
Then we define a new partition $P \comp Q : = \{ I_1, \ldots, I_r, J_1, \ldots, J_s \}$.
\end{df}

If $P$ and $Q$ are compatible, then there is a unique $I$ satisfying the 
definition, and so $P \comp Q$ is well-defined.

Let $P$ and $Q$ be compatible partitions.  Then we can define a product
$V_P \times V_Q \; \longrightarrow \; \; V_{P \comp Q}$, 
also denoted $\comp$, as follows. If $e_K \in E_K$ for 
$K = I_1, \ldots, I_r, J_1, \ldots, J_s$, then
\begin{equation}
(\phi \comp \psi) (e_{I_1}, \ldots, e_{I_r}, e_{J_1}, \ldots, e_{J_s}) : 
= \pair{\phi (e_{I_1}, \ldots, e_{I_r} )}{\psi( e_{J_1}, \ldots, e_{J_s})}
\end{equation}
where $\pair{}{}$ is the pairing of the bundles $E_{I^C}$ and $E_I$, which are 
dual to each other.

\begin{ej}
Consider $\phi\comp\psi$ where $\phi = (1,2,03)$ and $\psi = (0,3,12)$
as in Example \ref{ej:X3}. Here $I = \{0,3\}$. Writing $P$ and $Q$ for the
partitions, we have $P\comp Q = \{1,2,3,0\}$. Now
$$
(\phi\comp\psi)(e_1, e_2, e_3, e_0) = \pair{\phi(e_1,e_2)}{\psi(e_3, e_0)},
$$
where the pairing is $E_{12}\times_{M} E_{30}\to\R.$
\end{ej}

We can finally describe $\theta_{X_k}^{\BE}$.

\begin{lem} \label{lem:thetaX}
Let $\phi = (\phi_P)_{P \in \parti{n}}$ and let 
$\psi = (\psi_P)_{P \in \parti{n}}: = \theta_{X_k}^{\BE}(\phi)$.  Then
\begin{equation} \label{eq:Xkaction1}
\psi_P  = \phi_P
\end{equation}
if $0$ and $k$ are together in $P$, and
\begin{equation} \label{eq:Xkaction2}
\psi_P = 
\sum_{j=1}^{n-1}  (-1)^j \sum_{P = Q_1 \comp \ldots \comp Q_j}  \phi_{Q_1} \comp \ldots \comp \phi_{Q_j}
\end{equation}
otherwise.  The last sum in \eqref{eq:Xkaction2} is taken over all $j$-tuples of 
partitions $Q_1, \ldots, Q_j \in \parti{n}$ such that $0$ and $k$ are separate 
in $Q_i$ for all $i$;  $Q_i$ and $Q_{i+1}$ are compatible through $I_i$ for all 
$i = 1, \ldots, j-1$;  $I_i \neq I_{i+1}^C$ (otherwise the iterated composition 
does not make sense); and $P = Q_1 \comp \ldots \comp Q_j$.  
\end{lem}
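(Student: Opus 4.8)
The plan is to compute $\theta_{X_k}^{\BE}(\phi)$ directly from its definition, using the commutative diagram in Figure~\ref{fig:commdiag} with $W = X_k$. Unwinding that definition, $\psi = \theta_{X_k}^{\BE}(\phi)$ is the unique element of $\G{\BE}$ such that, after dualizing over $\lambda_k = (0\,k)$, the composite $\Omega^k(\psi)$ agrees with the decomposition obtained by transporting $S_1 \circ \Omega^0(\phi)$ through the dualization functor $X_k$. Concretely, I would fix a decomposed \nvb{n} $E$, apply $\Omega^0(\phi)$ to its elements, then carry out the $X_k$-dualization explicitly at the level of the decomposed total space $\decom{E}^{X_k} = \bigoplus_{R\text{ across }[n,k]} E_R$. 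The dualization over $E_{\hop{n}\setminus k}$ is, componentwise, a sequence of ordinary vector-bundle dualizations, and the effect of a linear map $\phi_P$ on a dual pairing is read off from the standard formula for how the dual of an automorphism acts: transposing and inverting.

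First I would treat the easy components, equation~\eqref{eq:Xkaction1}: if $0$ and $k$ lie in the same block $I$ of $P$, then in passing from $\phi$ to its $X_k$-dual the tomo $\phi_P$ indexes a pairing that is \emph{unaffected} by dualizing over the $k$-direction — the block containing both $0$ and $k$ plays the role of the "output" bundle $E_{I^C}^*$, and since neither $0\in\in R^c$ nor does the dualization touch that block, the corresponding term is carried across unchanged. This is the same phenomenon as the second component $(2,3,01)\mapsto(2,3,01)$ in Example~\ref{ej:X3}, and the verification is a short bookkeeping argument with the convention $E_I^* = E_{I^C}$ from~\eqref{eq:ERH}.

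The substance is equation~\eqref{eq:Xkaction2}. Here $0$ and $k$ are separate in $P$, so the block $R^c$ of $P$ with $0\in\in R^c$ does not contain $k$; dualizing over the $k$-direction genuinely mixes the linear data. The key step is to recognize that $\Omega^0(\phi)$, viewed as an automorphism of the decomposed total space, is \emph{unipotent} — it is the identity plus strictly lower-order terms with respect to the grading by subsets — so that when we dualize (transpose and invert) we get a geometric-series expansion $\sum_{j\geq 1}(-1)^{j}(\text{nilpotent part})^{\comp j}$, truncating at $j = n-1$ since the nilpotent part kills after that many compositions. Expanding this geometric series in the basis of tomos, and matching the bilinear pairing appearing in each term with the product $\comp$ of Definition~\ref{defin:composition}, produces exactly the indexed sum in~\eqref{eq:Xkaction2}: the compatibility condition "$Q_i$ and $Q_{i+1}$ compatible through $I_i$" is precisely what makes the iterated pairing well-typed, the requirement "$0$ and $k$ separate in each $Q_i$" records that only the genuinely dualized tomos enter, and the side condition $I_i \neq I_{i+1}^C$ rules out degenerate compositions. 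I would carry this out by induction on $n-|P|$ (equivalently on the number of commas), checking the base case of a $2$-tomo (where the sum collapses to the single term $-\phi_{Q_1}$, recovering the sign-flip-and-swap behaviour seen in Example~\ref{ej:X3}) and then the inductive step via the recursive structure of the dualization as a "vector bundle object in \nvbs{(n-1)}".

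The main obstacle I expect is purely combinatorial bookkeeping: making the correspondence between "terms in the expanded geometric series of a transposed unipotent automorphism" and "admissible $j$-tuples $Q_1,\dots,Q_j$ with the stated compatibility conditions" airtight, including getting every sign right. The underlying algebra — dual of a unipotent map is the inverse-transpose, expanded as an alternating sum — is straightforward; the work is entirely in verifying that the indexing sets on the two sides match bijectively, that no admissible tuple is double-counted, and that the pairing bundle identifications $E_{I_i}^* = E_{I_i^C}$ are applied consistently throughout. Once the $n=3$ case of Example~\ref{ej:X3} is reproduced as a sanity check, the general pattern is forced, and the remaining verification is a direct (if lengthy) computation that I would present compactly rather than in full.
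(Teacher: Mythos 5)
Your plan is correct and follows essentially the same route as the paper: the paper's proof consists precisely of the pairing identity $\pair{e}{e'} = \pair{\Omega^0(\phi)(e)}{\Omega^k(\psi)(e')}$ over the shared base $\decom{E_{\hop{n}\setminus k}}$, followed by substituting \eqref{eq:defgk} and \eqref{eq:stato} and solving for $\psi$, which is exactly your ``inverse transpose of a unipotent statomorphism'' computation. Your Neumann-series organization of the resulting triangular system (with the tomos having $0$ and $k$ together acting only on base coordinates, hence carried over unchanged) is just a more explicit way of carrying out the substitution the paper leaves implicit.
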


\begin{proof}
Consider Figure \ref{fig:commdiag} with $W = X_k$. 
Let $A : = E_{\hop{n} \setminus k}$.  
Recall that $\decom{E}$ and 
$\decom{E^{X_k}}$ are dual vector bundles over $\decom{A}$.  
Let $e \in \decom{E}$ and let $e' \in \decom{E^{X_k}}$ be elements on fibers over 
the same $a \in A$.  Then 
\begin{equation} \label{eq:pairing}
\pair{e}{e'} = \pair{\Omega^0(\phi)(e)}{\Omega^k(\psi)(e')}.
\end{equation}
where $\pair{}{}$ is the pairing of the dual bundles over $\decom{A}$.   
Now we substitute \eqref{eq:defgk} and \eqref{eq:stato} in \eqref{eq:pairing} 
and we obtain \eqref{eq:Xkaction1} and \eqref{eq:Xkaction2}.
\end{proof}

\begin{ej}
In the case $n=4$, $k=1$, a typical action on a 3-tomo is 
$$
\theta_{X_1}(1,2,3,04) = -(1,2,3,04) + (04,2,13)\comp(1,3,024) + (04,3,12)\comp(1,2,034). 
$$
For the unique 4-tomo, $\theta_{X_1}(0,1,2,3,4)$ is given by 
\begin{multline*} 
- (1,2,3,4,0) + (0,2,134)\comp(1,3,4,02) + (0,3,124)\comp(1,2,4,03) + (0,4,123)\comp(1,2,3,04) \\
+ (14,2,3,0)\comp(1,4,023) - (0,2,134)\comp(14,3,02)\comp(1,4,023) - (0,3,124)\comp(14,2,03)\comp(1,4,023)\\
+ (13,2,4,0)\comp(1,3,024) - (0,2,134)\comp(13,4,02)\comp(1,3,024) - (0,4,123)\comp(13,2,04)\comp(1,3,024)\\ 
+ (12,3,4,0)\comp(1,2,034) - (0,3,124)\comp(12,4,03)\comp(1,2,034) - (0,4,123)\comp(12,3,04)\comp(1,2,034)
\end{multline*}
\end{ej}

Using Lemma \ref{lem:thetaX} and Equation \eqref{eq:quasiaction}, it is in principle
possible to obtain the explicit form of $\theta^{\BE}_W$ for any $W \in \dg_n$.  
This is a computation of intimidating length, which we will not attempt here. 
It will be sufficient to consider certain special cases. 

Let $\phi$ be a 2-tomo. Since there are no factorizations of the partition, 
$\theta_{X_k}(\phi)$, for each $X_k$ and hence each $W\in\dg_n$, must be $\pm\phi'$ 
for another 2-tomo $\phi'$. For $W\in K_{n+1}$ the integers are unchanged. This
provides a direct proof of the following corollary of Lemma \ref{lem:thetaX}. 

\begin{cor}
For $W\in K_{n+1}$ and $\phi$ a $2$-tomo, $\theta_W(\phi) = \pm\phi$. 
\end{cor}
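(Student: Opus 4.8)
The claim is that for any $W\in K_{n+1}$ and any $2$-tomo $\phi\in V_P$ (so $P$ is a partition of $[n]$ into exactly three subsets), $\theta_W(\phi)=\pm\phi$ — in particular $\theta_W$ sends $\phi$ back to the \emph{same} $V_P$, not merely to the $V_Q$ of some other triple partition $Q$. The plan is to analyze how the generators $X_k$ act on $2$-tomos using Lemma~\ref{lem:thetaX}, and then track what the permutation $\pi(W)=\id$ forces.

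First I would observe that for a $2$-tomo $\phi\in V_P$ there are no nontrivial factorizations $P=Q_1\comp\dots\comp Q_j$ with $j\geq 2$ in which every $Q_i$ lies in $\parti{n}$ (each $Q_i$ would need at least three blocks, but composing two partitions with $\geq 3$ blocks through a pair $I,I^C$ produces a partition with $\geq(3-1)+(3-1)=4$ blocks, too many to be $P$). Hence \eqref{eq:Xkaction2} collapses: either $0$ and $k$ are together in $P$, and then $\theta_{X_k}(\phi)=\phi$ by \eqref{eq:Xkaction1}; or they are separate, and then $\theta_{X_k}(\phi)=-\phi_{Q}$ for the unique $Q\in\parti{n}$ with a single factor, which one identifies concretely: writing $P=\{I_0,I_1,I_2\}$ with $k\in\in I_1$, $0\in\in I_0$, the relevant $Q$ is obtained from $P$ by swapping the roles of $0$ and $k$ via $\lambda_k$, i.e.\ $Q=\lambda_k(P)$ read as a partition of $[n,k]$ renamed back. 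So on $2$-tomos, \emph{each} generator $X_k$ acts as a signed permutation of the set of $V_P$'s, and the underlying unsigned permutation of the index partitions $P$ is exactly the one induced by $\lambda_k=(0\,k)\in S_{n+1}$ acting on partitions of $[n]$.

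From here the conclusion is quick. Composing generators, for any word $W$ the map $\theta_W$ sends $V_P$ to $\pm V_{P'}$ where $P'$ is the image of $P$ under $\pi(W)$ acting on partitions of $[n]$. If $W\in K_{n+1}$, then $\pi(W)=\id$, so $P'=P$ and $\theta_W(\phi)=\pm\phi$ for every $2$-tomo $\phi$. (One should note that a priori the scalar could depend on $\phi$, but it is $\pm1$ in all cases, which is all that is claimed; keeping precise track of the sign would use Lemma~\ref{lem:thetaX} together with \eqref{eq:quasiaction}, but is not needed here.)

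The main obstacle is the first step: justifying cleanly that no higher-order terms survive in \eqref{eq:Xkaction2} when $P$ has only three blocks, and correctly identifying the single surviving partition $Q$ and its sign. This is a block-counting argument on the composition operation $\comp$ — each $\comp$ drops one block from each side and merges nothing new, so a $j$-fold composition of partitions with $b_1,\dots,b_j$ blocks has $\sum(b_i-1)+? $ blocks; making this bookkeeping exact (and handling the constraint $I_i\neq I_{i+1}^C$) is the only delicate point, and it is genuinely elementary once set up. The passage from "$X_k$ acts as a signed permutation on $2$-tomos tracking $\lambda_k$" to the statement for arbitrary $W\in K_{n+1}$ is then immediate.
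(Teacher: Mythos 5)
Your proposal is correct and follows essentially the same route as the paper: the paper's own (very brief) argument preceding the corollary is precisely that a partition into three blocks admits no factorization $Q_1\comp\cdots\comp Q_j$ with $j\geq 2$, so Lemma~\ref{lem:thetaX} makes each $\theta_{X_k}$ (hence each $\theta_W$) act on $2$-tomos as a signed permutation whose underlying permutation of index partitions is governed by $\pi(W)$, and $\pi(W)=\id$ for $W\in K_{n+1}$ forces $\theta_W(\phi)=\pm\phi$. Your block-counting justification and the tracking of the partition via $\lambda_k$ simply make explicit what the paper leaves implicit.
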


It follows that every nonidentity element of $K_{n+1}$ has order $2$, and so 
$K_{n+1}$ is abelian and a direct product of $C_2$s. 

Returning to the discussion following (\ref{eq:kernelwords}), since by
Corollary \ref{cor:faithfulK} the restriction 
of $\theta$ to $K_{n+1}$ is faithful, we can drop the notation $\theta$. 
As noted already, we have
$X_i^2 = 1$ from standard duality, and $(X_iX_j)^3 = 1$ from the double case. 

\begin{prop}
\label{prop:ijik}
$K_{n+1}$ is generated by the words $(X_iX_jX_iX_k)^2$, for $i, j,k$ distinct, 
each of which has order $2$, and their conjugates.
\end{prop}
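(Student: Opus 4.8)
The plan is to split the statement into three parts --- that the listed words together with their conjugates generate $K_{n+1}$, that each has order at most $2$, and that none is trivial --- of which only the last requires genuine work.

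The first two parts are immediate from the material just above the proposition. By Lemma~\ref{lem:gener} applied to the standard Coxeter presentation of $S_{n+1}$, the group $K_{n+1}$ is the normal subgroup of $\dg_n$ generated by the words in \eqref{eq:kernelwords}, namely $X_i^2$, $(X_iX_j)^3$ and $(X_iX_jX_iX_k)^2$; since $X_i^2 = 1$ (ordinary duality) and $(X_iX_j)^3 = 1$ (the double case) already hold in $\dg_n$, $K_{n+1}$ is the normal closure in $\dg_n$ of the set $\{(X_iX_jX_iX_k)^2 : i,j,k\text{ distinct}\}$, and the normal closure of a set is, by definition, the subgroup generated by that set together with all conjugates of its members. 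For the order, each $(X_iX_jX_iX_k)^2$ maps under $\pi$ to $(\sigma_i\sigma_j\sigma_i\sigma_k)^2 = 1$ and so lies in $K_{n+1}$; as noted just after the preceding corollary, every non-identity element of $K_{n+1}$ has order $2$. Hence it only remains to show that each $(X_iX_jX_iX_k)^2$ is nontrivial.

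For this I would invoke the faithfulness of $\theta$ on $K_{n+1}$ (Corollary~\ref{cor:faithfulK}): it suffices to produce one $\phi\in\G{\BE}$ with $\theta_{(X_iX_jX_iX_k)^2}(\phi)\neq\phi$. By the symmetry in the indices of the formulas of Lemma~\ref{lem:thetaX} we may take $\{i,j,k\}=\{1,2,3\}$, and then compute $\theta_{(X_1X_2X_1X_3)^2}$ by iterating $\theta_{X_1},\theta_{X_2},\theta_{X_3}$ eight times via \eqref{eq:quasiaction} and Lemma~\ref{lem:thetaX}, starting from the tomo on which $(XYXZ)^2$ is already known to act by $-1$ in the triple case \cite{Gracia-SazM:2009}, with the passive indices $4,\dots,n$ absorbed into a single block of its supporting partition. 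Since $\lambda_1\lambda_2\lambda_1\lambda_3=(1\,2)(0\,3)$ has order $2$ in $S_{n+1}$, the supporting partition returns to its original value after the full word, and the passive indices are carried along inertly throughout; the computation is then formally identical to the $n=3$ one and yields $\theta_{(X_1X_2X_1X_3)^2}(\phi)=-\phi$. An alternative route is to restrict an arbitrary \nvb{n} to the triple vector bundle it carries in the directions $1,2,3$, observe that this restriction intertwines $X_1,X_2,X_3$ with the dualization functors of $\C_3$ and respects statomorphisms, and conclude that a natural isomorphism from $(X_1X_2X_1X_3)^2$ to the identity in $\C_n$ would restrict to one in $\C_3$, contradicting the triple case.

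The non-routine step is this last one. The difficulty is entirely in the sign bookkeeping: one must carry the chosen tomo through eight applications of the $\theta_{X_m}$ while tracking exactly the identifications $E_I\cong E_{I^C}^*$ and the sign conventions for the pairings $\pair{}{}$, just as in Example~\ref{ej:X3}, and check that the accumulated signs do not cancel. One should also keep in mind that on tomos of grade $\geq 3$ the maps $\theta_{X_m}$ are no longer signed permutations but signed sums of $\comp$-products, so it is cleanest to detect the word on a tomo of the lowest grade that works. Everything else --- the generation statement and the bound on the order --- is formal given Lemma~\ref{lem:gener}, Corollary~\ref{cor:faithfulK}, and the preceding corollary.
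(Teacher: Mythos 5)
Your proposal is correct and follows essentially the paper's own route: Lemma~\ref{lem:gener} applied to the stated presentation of $S_{n+1}$, the known relations $X_i^2=(X_iX_j)^3=1$, and the corollary that every nonidentity element of $K_{n+1}$ has order $2$, with nontriviality of $(X_iX_jX_iX_k)^2$ detected through the faithful action on $\G{\BE}$ (Corollary~\ref{cor:faithfulK}), which is exactly what the paper makes explicit via the graph description in \S\ref{sect:tgk}. The only caveat is your alternative ``restriction to the triple in directions $1,2,3$'' argument: it does not work as stated, since $(E^{X_1})_{0,2,3}$ is the dual over $E_{2,3}$ of the core-type triple $E_{14\cdots n,2,3}$ rather than of the face $E_{1,2,3}$, so the claimed intertwining fails, although a correct reduction can be obtained by instead taking all building bundles $E_R$ with $R\not\subseteq\{1,2,3\}$ to be zero bundles.
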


In the triple case, the conjugates of $(X_iX_jX_iX_k)^2$ are of the same form;
for $n\geq 4$ this is no longer so. 

In the next section we obtain a precise combinatorial description of
the group $K_{n+1}$. 

\section{The group $K_{n+1}$}
\label{sect:tgk}

To calculate the order of $K_{n+1}$, we identify it with a set of graphs. 

Denote the set of subgraphs of the complete graph on $n+1$ vertices, 
labeled $0, 1, \ldots, n$, by  $\gra{n+1}$. Under symmetric difference,
$\gra{n+1}$ is a group. Every nonidentity element is of order 2, and
$\gra{n+1}$ is isomorphic to the direct product of $\binom{n+1}{2}$ copies 
of $C_2$. 

Denote by $\ed{i}{j}$ the element of $\gra{n+1}$ for which the only edge is
the edge joining vertices $i$ and $j$. The $\{ \ed{i}{j} \;\vert\; 0\leq i < j \leq n \}$
are a set of generators for $\gra{n+1}$. 

There is a natural action of $S_{n+1}$ on $\gra{n+1}$ by permuting the vertices, 
which makes $\gra{n+1}$ into a $S_{n+1}$-module. There is also an action of $S_{n+1}$
on $K_{n+1}$ arising from the exact sequence (\ref{eq:ses}); since $K_{n+1}$ is abelian, 
the conjugation action of $\dg_n$ quotients to an action of $S_{n+1}$. 

We define an action $\overline{\theta} \co \gra{n+1} \times \G{\BE} \longrightarrow \G{\BE}$.
It is enough to describe the action of the generators $\ed{i}{j}$ of $\gra{n+1}$.  
Let $\phi = (\phi_P)_{P \in \parti{n}}$ and let 
$\psi = (\psi_P)_{P \in \parti{n}} : = \overline{\theta}_{\ed{i}{j}} (\phi)$.  Then
$\psi_P = \phi_P$ if $i$ and $j$ are together in $P$, and $\psi_P = - \phi_P$ otherwise.

This gives us an identification of $K_{n+1}$ with a subgroup of $\gra{n+1}$, 
and an embedding of $\dg_n$ in a semidirect product. 

\begin{thm}  
\label{thm:final}
For every $n$, there is an injective group homomorphism  
$\Psi\co  \dg_n \to \gra{n+1} \rtimes S_{n+1}$
defined by
\begin{equation*}
\Psi (X_i) = ( \ed{0}{i}, (0\,i))
\end{equation*}
for  $i = 1, \ldots, n$.
\end{thm}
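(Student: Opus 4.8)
The plan is to build $\Psi$ as the map induced on the quotient $\dg_n = \wg_n/(\text{natural iso})$ by a homomorphism $\widehat\Psi\co \wg_n \to \gra{n+1}\rtimes S_{n+1}$, and then to deduce injectivity from the two short exact sequences in play: \eqref{eq:ses} for $\dg_n$ and the split sequence $1\to\gra{n+1}\to\gra{n+1}\rtimes S_{n+1}\to S_{n+1}\to 1$. First I would define $\widehat\Psi$ on the free generators by $X_i\mapsto(\ed{0}{i},(0\,i))$; since $\wg_n$ is a priori only generated by the $X_i$ (with the relations coming from $\dg_n$), I must check that $\widehat\Psi$ kills everything that is trivial in $\dg_n$, i.e.\ that it factors through $\dg_n$. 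By Theorem~\ref{thm:W=1} and Corollary~\ref{cor:faithfulK}, two words $W_1,W_2$ with $\pi(W_1)=\pi(W_2)$ become equal in $\dg_n$ exactly when $\theta_{W_1}=\theta_{W_2}$ on $\G{\BE}$; so it suffices to show that the $S_{n+1}$-component of $\widehat\Psi(W)$ is $\pi(W)$ (immediate, since $(0\,i)\mapsto(0\,i)$ under the quotient $\gra{n+1}\rtimes S_{n+1}\to S_{n+1}$) and that the $\gra{n+1}$-component of $\widehat\Psi(W)$, read through the action $\overline\theta$, reproduces $\theta_W$ on $\G{\BE}$ whenever $\pi(W)=\id$. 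This last point is where the combinatorial description of $\theta_k$ in Lemma~\ref{lem:thetaX}, restricted to $2$-tomos as in the Corollary following it, does the work: on a $2$-tomo $\phi_P$, $\theta_{X_k}$ multiplies by $-1$ precisely when $0$ and $k$ are separate in $P$, which is exactly the sign rule defining $\overline\theta_{\ed{0}{k}}$.

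The key steps, in order, are: (1) verify $\overline\theta$ is a genuine action of $\gra{n+1}$ on $\G{\BE}$ — since the generators $\ed{i}{j}$ act by commuting sign changes and $\gra{n+1}$ is the direct product of the $\langle\ed{i}{j}\rangle$, this is formal; (2) show that for $W\in\wg_n$ with $\pi(W)=\id$, the action $\theta_W$ on $\G{\BE}$ agrees with $\overline\theta_{g(W)}$ for a well-defined $g(W)\in\gra{n+1}$, and that $W\mapsto g(W)$ is a homomorphism $\Tilde K_{n+1}\to\gra{n+1}$ — here I use that every element of $K_{n+1}$ has order $2$ and acts by a sign on each $2$-tomo (the Corollary to Lemma~\ref{lem:thetaX}), so $\theta_W$ is determined by the set of partition-types on which it flips sign, and this set is realized by a symmetric-difference of edges via the $\overline\theta_{\ed{i}{j}}$; (3) assemble $\widehat\Psi(W) := (g(\pi(W)^{-1}W\cdot\text{section}),\,\pi(W))$ more carefully — actually it is cleaner to define $\widehat\Psi$ directly on generators and check the semidirect-product multiplication rule $\widehat\Psi(X_iX_j) = (\ed{0}{i} + (0\,i)\cdot\ed{0}{j},\,(0\,i)(0\,j))$ is consistent with how $\theta$ and $\pi$ compose, using \eqref{eq:quasiaction}; (4) conclude $\widehat\Psi$ factors through $\dg_n$, giving $\Psi$; (5) prove $\Psi$ injective by a diagram chase: $\pi = (\text{proj to }S_{n+1})\circ\Psi$, so $\ker\Psi\subseteq K_{n+1}$, and on $K_{n+1}$ the map $\Psi$ restricts to $W\mapsto(g(W),\id)$, which is injective because $g(W)$ determines $\overline\theta_{g(W)} = \theta_W$ and $\theta$ is faithful on $K_{n+1}$ by Corollary~\ref{cor:faithfulK}.

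The main obstacle I anticipate is step (2)/(3): proving that the sign pattern of $\theta_W$ on $2$-tomos actually comes from a \emph{subgraph} — equivalently, that the function sending a partition-type $P$ to the sign $\theta_W$ applies is a "cut function" of the form "$-1$ iff some fixed edge set crosses the partition induced on $\{0,\dots,n\}$ by $P$". For a single generator $X_k$ this is the edge $\ed{0}{k}$ by Lemma~\ref{lem:thetaX}; the issue is whether products stay within the span of the $\ed{i}{j}$ under the $S_{n+1}$-twisted composition. This should follow because the $\gra{n+1}$-component multiplies by the cocycle rule $(g_1,s_1)(g_2,s_2) = (g_1 + s_1\cdot g_2,\,s_1 s_2)$ and $\gra{n+1}$ is closed under symmetric difference and under the $S_{n+1}$-action; so the real content is just matching this algebra to the recursion \eqref{eq:quasiaction} for $\theta$, i.e.\ checking $\theta_{X_iX_j}$ on $2$-tomos equals $\overline\theta_{\ed{0}{i}+(0\,i)\ed{0}{j}}$, which unwinds to: $\theta_{X_i}$ flips the sign on $P$ iff $0,i$ separate in $P$, then relabeling $i\leftrightarrow 0$ and applying $\theta_{X_j}$ flips again iff $0,j$ separate, and $\ed{0}{i}+(0\,i)\ed{0}{j} = \ed{0}{i}+\ed{i}{j}$ crosses $P$ iff exactly one of "$0,i$ together" and "$i,j$ together" fails — a short check against the three cases of how $\{0,i,j\}$ can be distributed among the blocks of $P$. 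Once this local verification is in hand, the global statement follows by induction on word length using \eqref{eq:quasiaction}, and injectivity is the easy diagram chase above.
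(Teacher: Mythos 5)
Your overall route --- define the map through the action $\theta$, match it with $\overline{\theta}$, and get well-definedness and injectivity from Theorem~\ref{thm:W=1} and Corollary~\ref{cor:faithfulK} --- is coherent, and your $2$-tomo bookkeeping is right: by Lemma~\ref{lem:thetaX} a $3$-block partition admits no factorization $P=Q_1\comp Q_2$, so each $\theta_{X_k}$ acts on $2$-tomos by the sign rule of $\ed{0}{k}$, and your three-case check of $\theta_{X_iX_j}$ against $\ed{0}{i}+\ed{i}{j}$ is correct. The genuine gap is the passage from $2$-tomos to all of $\G{\BE}$. In step (2) you assert that for a kernel word $W$ the map $\theta_W$ ``is determined by the set of partition-types on which it flips sign'', i.e.\ by its restriction to $2$-tomos, and in step (5) your injectivity argument needs the full identity $\overline{\theta}_{g(W)}=\theta_W$ on all of $\G{\BE}$, because Corollary~\ref{cor:faithfulK} gives faithfulness on the whole parameter space, not on $2$-tomos. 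Neither the determination claim nor the full identity follows from what you verify: on an $m$-tomo with $m\geq 3$ each $\theta_{X_k}$ carries the correction terms of \eqref{eq:Xkaction2} (signed sums of $\comp$-products of lower tomos), so the intermediate words in your proposed ``induction on word length using \eqref{eq:quasiaction}'' do not act diagonally, and the induction as described only ever controls the $2$-tomo components. What you actually need is \eqref{eq:iota} of Theorem~\ref{thm:kernel}: for $W$ with $\pi(W)=\id$ all correction terms cancel and $\theta_W$ is the diagonal graph action on every $V_P$. That cancellation is the real content and must be proved --- for instance by computing $\theta$ of the generators $(X_iX_jX_iX_k)^2$ of $K_{n+1}$ from Proposition~\ref{prop:ijik} on all tomos and then using normality and abelianness --- or else you must replace Corollary~\ref{cor:faithfulK} by a proof that $\theta$ restricted to $K_{n+1}$ is already faithful on $2$-tomos, which is likewise not supplied.

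Two smaller points. First, your well-definedness step tacitly concludes $g(W)=\emptyset$ from $\overline{\theta}_{g(W)}=\id$; this needs the (easy, but unstated) fact that for $n\geq 3$ a nonempty graph acts nontrivially on some $V_P$ with $P\in\parti{n}$, and it fails verbatim for $n=2$, where the kernel is trivial and a separate direct check of the $S_3$ relations is the quickest fix. Second, for comparison: the paper's own proof goes the other way, computing directly in $\gra{n+1}\rtimes S_{n+1}$ that the images of $X_i^2$ and $(X_iX_j)^3$ are trivial and that $(X_iX_jX_iX_k)^2$ maps to the order-two four-cycle graph, and delegating the identification of the kernel with its graph action to Theorem~\ref{thm:kernel}; so the step your proposal is missing is precisely the step the paper concentrates in \eqref{eq:iota}, and without it the injectivity claim is not established.
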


\begin{proof}
That $\Psi(X_i)^2 = ( \ed{0}{i}, (0\,i))^2$ is the identity is clear. For $i\neq j$
we have
$$  
(\ed{0}{i}, (0\,i))\,(\ed{0}{j}, (0\,i))\,(\ed{0}{i}, (0\,i))
= (\ed{0}{i} + \ed{i}{j} + \ed{j}{0}, (i\,j))
$$
and the square of this is again the identity. Lastly, for $i.j.k$ distinct we have, 
$$  
(\ed{0}{i},(0\,i))\,(\ed{0}{j},(0\,j))\,(\ed{0}{i},(0\,i))\,(\ed{0}{k},(0\,k))
= (\ed{k}{j} + \ed{i}{j} +\ed{k}{i} + \ed{0}{k}, (i\,j)(0\,k)). 
$$
Squaring this, the $S_{n+1}$ component vanishes and the $\gra{n+1}$ component is
$\ed{0}{i} + \ed{i}{k} +\ed{k}{j} + \ed{j}{0}$, which is of order two. 
Thus $\Psi$ is well-defined and defines
an injective homomorphism $\dg_n \to \gra{n+1} \rtimes S_{n+1}$. 
\end{proof}

Using $\Psi$, define $\psi \co K_{n+1} \to \gra{n+1}$ by
\begin{equation}
\label{df:iota}
\psi((X_iX_jX_iX_k)^2) \; = \; \ed{0}{i} + \ed{0}{j} + \ed{k}{i} + \ed{k}{j} \; = \;
\raisebox{1.2pc}{
\xymatrix@=8pt{0 \ar@{-}[r] \ar@{-}[d] & i \ar@{-}[d] \\ j \ar@{-}[r] & k}
}
\end{equation}

\begin{thm}  \label{thm:kernel}
The map $\psi$ is an injective morphism 
of $S_{n+1}$--modules, and commutes with the actions of 
$K_{n+1}$ and $\gra{n+1}$ on $\G{\BE}$. That is,  
\begin{equation}  \label{eq:iota}
\overline{\theta}_{\psi(W)} = \theta_{W}
\end{equation}
for all $W \in K_{n+1}$. The image $\psi(K_{n+1})$ is the subgroup of $\gra{n+1}$ 
consisting of graphs such that
\begin{itemize}
\item each vertex has even valency, and
\item the total number of edges is even.
\end{itemize}
\end{thm}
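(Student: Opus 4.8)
The plan is to establish the three claims about $\psi$ in turn: that it is a well-defined injective $S_{n+1}$-module morphism, that it intertwines the two actions, and that its image is exactly the stated ``even subgraph'' subgroup of $\gra{n+1}$.

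First I would pin down $\psi$ as a map and check it is a morphism of $S_{n+1}$-modules. The generators $(X_iX_jX_iX_k)^2$ of $K_{n+1}$ (Proposition~\ref{prop:ijik}) are sent to the four-cycle graph on vertices $0,i,j,k$ displayed in \eqref{df:iota}. I would verify that the relations satisfied by these generators in $K_{n+1}$ --- obtained by pushing forward the relations of $S_{n+1}$ via Lemma~\ref{lem:gener}, or more directly by computing inside $\gra{n+1}\rtimes S_{n+1}$ using $\Psi$ from Theorem~\ref{thm:final} --- are respected, so that $\psi$ is well defined on all of $K_{n+1}$. The $S_{n+1}$-equivariance is transparent on generators: a permutation $\sigma\in S_{n+1}$ acts on $(X_iX_jX_iX_k)^2$ by conjugation (via the action coming from \eqref{eq:ses}), and on the right-hand graph by relabelling vertices $0,i,j,k$; both give the four-cycle on $\sigma(0),\sigma(i),\sigma(j),\sigma(k)$. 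One subtlety to address here is that $\sigma$ need not fix $0$, so the image graph genuinely uses vertex~$\sigma(0)$, not~$0$ --- this is exactly why the codomain must be all of $\gra{n+1}$ and not just graphs through~$0$.

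Next I would prove the intertwining identity \eqref{eq:iota}, $\overline{\theta}_{\psi(W)} = \theta_W$ for $W\in K_{n+1}$. It suffices to check it on the generators $W = (X_iX_jX_iX_k)^2$, since both sides are homomorphisms into the symmetric group on $\G{\BE}$ and $K_{n+1}$ is abelian generated by these and their conjugates (the conjugate case then follows from equivariance). For a generator, I would use the quasi-action formula \eqref{eq:quasiaction} to decompose $\theta_{(X_iX_jX_iX_k)^2}$ into a composite of the maps $\theta_{X_\bullet}$ computed in Lemma~\ref{lem:thetaX}, and track the sign picked up on each tomo $\phi_P$. On a $2$-tomo the Corollary after Lemma~\ref{lem:thetaX} already tells us $\theta_W(\phi)=\pm\phi$; the content is that this sign equals $(-1)$ raised to the number of edges of the four-cycle $0i,0j,ki,kj$ that ``cross'' the partition $P$ (i.e. have endpoints in different blocks), which is precisely $\overline{\theta}_{\psi(W)}$ by the definition of $\overline\theta$. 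For higher tomos one must also check that the product terms in \eqref{eq:Xkaction2} cancel out over the whole word $(X_iX_jX_iX_k)^2$, leaving only the diagonal sign --- this bookkeeping, essentially a lift of the double-bundle ``cornering'' cancellation of \cite{Gracia-SazM:2009}, is where I expect the real work to lie. Injectivity of $\psi$ is then immediate: by Corollary~\ref{cor:faithfulK} the action $\theta$ of $K_{n+1}$ on $\G{\BE}$ is faithful, so $\psi(W)=0$ forces $\theta_W=\overline\theta_0=\id$ and hence $W=1$.

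Finally I would identify the image. One inclusion is a direct computation: the generating four-cycle $\ed{0}{i}+\ed{0}{j}+\ed{k}{i}+\ed{k}{j}$ has every vertex of valency $0$ or $2$ and has $4$ edges, so it lies in the ``even valency, even edge-count'' subgroup $H\le\gra{n+1}$; since symmetric difference preserves the parity of every valency and of the total edge count (each generator being $4$ edges with all valencies even), $\psi(K_{n+1})\subseteq H$. For the reverse inclusion I would show $H$ is spanned by four-cycles of the form $0a,0b,cd,$ wait --- more precisely by the graphs $\psi((X_0{}_aX_bX_0{}_aX_c)^2)$-type four-cycles together with their $S_{n+1}$-translates; concretely, $H$ is generated by all $4$-cycles on arbitrary vertex quadruples (a standard fact: the cycle space of $K_{n+1}$ is generated by triangles, hence by sums of two triangles, i.e. $4$-cycles, and the extra even-edge-count condition is exactly the codimension-one subspace cut out once we restrict to an even number of $3$-edge triangles — I would instead observe directly that $H$ is generated by the $4$-cycles $\{ab,bc,cd,da\}$ and that each such is an $S_{n+1}$-image of the standard generator $\{0i,ij,jk,k0\}$, which lies in $\psi(K_{n+1})$ by equivariance). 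Counting then gives $\dim_{\F_2}H = \binom{n+1}{2} - (n+1) + 1 = \tfrac12(n+1)(n-2)$, recovering Corollary~\ref{cor:orderK}. The main obstacle, as flagged, is the cancellation computation verifying \eqref{eq:iota} on higher tomos; everything else is linear algebra over $\F_2$ and symmetry.
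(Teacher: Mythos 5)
Your proposal takes essentially the same route as the paper: the paper's own proof simply reduces the theorem to checking $S_{n+1}$-equivariance and \eqref{eq:iota} on the generators $(X_iX_jX_iX_k)^2$, with injectivity from the faithfulness of $\theta$ (Corollary \ref{cor:faithfulK}), and you spell out these checks and the identification of the image with the span of $4$-cycles in more detail than the paper does. One small slip in your closing count: the even-edge-count condition cuts the cycle space, of dimension $\binom{n+1}{2}-(n+1)+1$, down by one further dimension, so the correct count is $\binom{n+1}{2}-(n+1)=\tfrac{1}{2}(n+1)(n-2)$ rather than $\binom{n+1}{2}-(n+1)+1$.
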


\begin{proof}
Recall that we are identifying the group $K_{n+1}$ with its action on $\G{\BE}$.  
Since $K_{n+1}$ is an abelian group in which every non-identity element 
has order 2, we only need check that $\psi$ is $S_{n+1}$--equivariant and 
that \eqref{eq:iota} holds. Injectivity follows from the faithfulness of $\theta$. 
\end{proof}

In the following, we will abuse notation and think of $K_{n+1}$ both as a subgroup of 
$\gra{n+1}$ and as a subgroup of $\dg_{n+1}$.  By counting the number of graphs 
satisfying the two conditions at the end of Theorem \ref{thm:kernel} we obtain

\begin{cor}
\label{cor:orderK}
As a group, $K_{n+1}$ is isomorphic to the direct product of $\frac{1}{2}(n+1)(n-2)$ 
copies of $C_2$.   In particular
$$
\left|  \dg_n \right|  = 2^{\frac{1}{2}(n+1)(n-2)} (n+1)!
$$
\end{cor}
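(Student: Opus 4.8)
The plan is to reduce the corollary to a routine dimension count in the $\F_2$-vector space $\gra{n+1}$, using the description of $K_{n+1}$ furnished by Theorem~\ref{thm:kernel}. That theorem identifies $K_{n+1}$, via $\psi$, with the subgroup $\mathcal{E}\subseteq\gra{n+1}$ consisting of those subgraphs of the complete graph on the vertex set $\{0,1,\dots,n\}$ in which every vertex has even valency and the total number of edges is even. Since $\gra{n+1}$ is a direct product of copies of $C_2$, so is every subgroup of it; in particular $K_{n+1}$ is a direct product of $\dim_{\F_2}\mathcal{E}$ copies of $C_2$, and it remains only to compute this dimension.

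First I would recall that the subspace $Z\subseteq\gra{n+1}$ of all even-valency subgraphs of the complete graph on $n+1$ vertices is its cycle space, of dimension $\binom{n+1}{2}-(n+1)+1=\binom{n}{2}$. One clean justification: the map sending a subgraph to the set of its odd-valency vertices is $\F_2$-linear, its image is exactly the family of even-cardinality subsets of $\{0,\dots,n\}$ (a space of dimension $n$; surjectivity is witnessed by matchings in the complete graph), and its kernel is by definition $Z$, so $\dim_{\F_2} Z=\binom{n+1}{2}-n=\binom{n}{2}$. Alternatively, fixing a spanning tree of the complete graph exhibits the $\binom{n}{2}$ fundamental cycles of the non-tree edges as a basis of $Z$.

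Next I would impose the edge-parity condition. The assignment sending a graph $g$ to the parity of its number of edges restricts to an $\F_2$-linear functional on $Z$; for $n\geq 2$ this functional is nonzero, since the triangle on $\{0,1,2\}$ lies in $Z$ and has three edges. Hence it is surjective, and $\mathcal{E}$, being its kernel, has dimension $\binom{n}{2}-1=\tfrac12 n(n-1)-1=\tfrac12(n+1)(n-2)$. (For $n=2$ this correctly gives $\dim_{\F_2}\mathcal{E}=0$, i.e. $K_3$ trivial and $\dg_2\isom S_3$, consistently with the double case.) Therefore $|K_{n+1}|=2^{\frac12(n+1)(n-2)}$, and substituting into the short exact sequence~(\ref{eq:ses}) gives $|\dg_n|=|K_{n+1}|\cdot|S_{n+1}|=2^{\frac12(n+1)(n-2)}(n+1)!$.

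I do not expect a genuine obstacle here: the entire substance of the corollary is carried by Theorem~\ref{thm:kernel}, and this argument merely translates its combinatorial description into a cardinality. The only points requiring a line of care are the standard dimension formula for the cycle space $Z$ and the remark that the edge-parity functional is nontrivial on it; both are immediate.
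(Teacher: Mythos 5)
Your proof is correct and follows essentially the same route as the paper, which simply asserts the corollary ``by counting the number of graphs satisfying the two conditions'' of Theorem~\ref{thm:kernel}; you have supplied the count explicitly via the cycle space of the complete graph on $n+1$ vertices and the edge-parity functional, and the arithmetic $\binom{n}{2}-1=\tfrac12(n+1)(n-2)$ together with the short exact sequence~(\ref{eq:ses}) gives exactly the stated order of $\dg_n$.
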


\section{The kernel for $n = 4$}
\label{sect:K5}

To give an example in detail which displays all the features described in 
\S\ref{sect:tgk}, consider the case $n = 4$. The computations here were done 
in part by hand and in part using a Java program written by Ms.~ Diksha Rajen, 
a graduate student in Computer Science at Sheffield, under the supervision of 
Dr.~Mike Stannett. 

The group $K_5$ contains 12 elements of the form $(ijik)^2$, where we abbreviate each
$X_i$ to $i$. We give these arbitrary labels as follows:
\begin{gather*}
A:= (1213)^2,\ B:= (1312)^2,\ C:= (2321)^2,\ D:= (1214)^2,\ E:= (1412)^2,\ F:= (2421)^2,\\ 
K:= (1413)^2,\ L:= (1314)^2,\ M:= (4341)^2,\ P:= (4243)^2,\ Q:= (4342)^2,\ R:= (2324)^2. 
\end{gather*}
Of course $A = (2123)^2 = (3121)^2 = (3212)^2$ also, and likewise for the other elements. 
The action of these on the 25 2-tomos is given in Table~\ref{table:action} at the end of 
the paper. 

To express the products of these elements we introduce three more labels,
$$
T := AD,\quad U := BF,\quad V := AQ, 
$$
We also encounter products which coincide with one of these 15 elements on the 2-tomos of
the form $2+2+1$ but have reversed signs on the 2-tomos of the form $3+1+1$. We therefore
define an element $i\in K_5$ in terms of its action: $i$ preserves each 2-tomo of
the form $2+2+1$ and reverses the signs on the 2-tomos of the form $3+1+1$.

Write $a:= Ai,\ b := Bi,\ \dots, v:= Vi$ and finally write $I$ for the identity element of
$K_5$. This completes the description of the 32 elements of $K_5$ in terms of their action
on 2-tomos. The multiplication table is given in Table \ref{table:mult}. 

The elements of $\gra{5}$ corresponding to the 12 elements $A, \dots, R$ are of the
type shown in (\ref{df:iota}). Calculating symmetric differences, we obtain 
Figure~\ref{fig:TUV} for $T$, $U$ and $V$. 
\begin{figure}[h]
\begin{center}
\subfloat[$T$]%
{\raisebox{1.2pc}{
\xymatrix@=8pt{1 \ar@{-}[r] \ar@{-}[d] & 3 \ar@{-}[d] \\ 4 \ar@{-}[r] & 2}
}}
\qquad
\subfloat[$U$]%
{\raisebox{1.2pc}{
\xymatrix@=8pt{1 \ar@{-}[rr] \ar@{-}[rd] && 0 \ar@{-}[rd] \ar@{-}[ld] \ar@{-}[rr] && 3 \ar@{-}[ld]\\ 
& 4 && 2 &}
}}
\qquad
\subfloat[$V$]%
{\raisebox{1.2pc}{
\xymatrix@=8pt{1 \ar@{-}[rr] \ar@{-}[rd] && 0 \ar@{-}[rd] \ar@{-}[ld] \ar@{-}[rr] && 2 \ar@{-}[ld]\\ 
& 3 && 4 &}
}}
\end{center}
\caption{Elements of $\gra{5}$ corresponding to $T$, $U$ and $V$. \label{fig:TUV}}
\end{figure}
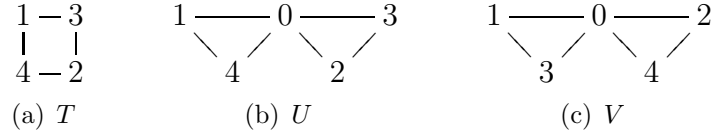

The graph for $i$ is the complete graph on the 5 vertices, and that for $I$ is the
graph with no edges. Multiplication by $i$ converts a graph to its complement. 

From Figure~\ref{fig:TUV}(a) we see that $T = 1Q1$, the conjugate of $Q$ by $X_1$. 
Likewise, it is equal to $2M2$, $3D3$ and $4A4$. In this way arbitrary conjugates can
be calculated. 

\section{Description of $\dg_n$}
\label{sect:description}

Given that $\dg_n$ is an extension \eqref{eq:ses} of $S_{n+1}$ by an abelian group, a 
natural question to ask is whether the extension splits; that is, whether $\dg_n$ is 
isomorphic to the semidirect product $K_{n+1} \rtimes S_{n+1}$.   We proved in 
\cite{Gracia-SazM:2009} that this is not the case for $n=3$.    

\begin{prop}
The extension \eqref{eq:ses} is split for $n = 4$.   
\end{prop}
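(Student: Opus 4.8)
The plan is to produce an explicit homomorphic section of $\pi\co\dg_4\to S_5$, working inside the embedding $\Psi\co\dg_4\hookrightarrow\gra{5}\rtimes S_5$ of Theorem~\ref{thm:final}. Using the presentation of $S_5$ by generators $\sigma_k=(0\,k)$, $k=1,\dots,4$, with relations $\sigma_i^2$, $(\sigma_i\sigma_j)^3$, $(\sigma_i\sigma_j\sigma_i\sigma_k)^2$ for distinct $i,j,k$ (the presentation used in Section~\ref{sect:dfg}), together with von Dyck's theorem, giving a section is the same as exhibiting elements $y_1,\dots,y_4\in\dg_4$ with $\pi(y_i)=(0\,i)$ which satisfy exactly these relations; the induced homomorphism $S_5\to\dg_4$ is then a right inverse to $\pi$, since the $\sigma_i$ generate $S_5$. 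As $\pi(y_i)=\pi(X_i)$, we may write $y_i=w_iX_i$ with $w_i\in K_5$, and conversely every tuple $(w_1,\dots,w_4)$ of elements of $K_5$ gives a candidate. So the whole problem reduces to choosing the $w_i$ so that all the relations hold.

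The second step is to turn those relations into a linear system over $\F_2$. By Corollary~\ref{cor:orderK}, $K_5$ is an elementary abelian $2$-group, and $\dg_4$ acts on it by conjugation through $\pi$, so $K_5$ is an $\F_2[S_5]$-module (written additively below); each relation on the $y_i$ then unwinds into an affine-linear equation on the $w_i$. Concretely, $y_i^2=(1+\sigma_i)w_i$, so $y_i^2=1$ forces $w_i$ into the $(0\,i)$-fixed subspace; $(y_iy_j)^3=(1+\rho_{ij}+\rho_{ij}^2)(w_i+{}^{\sigma_i}w_j)$ with $\rho_{ij}=(0\,i)(0\,j)$, a homogeneous condition, with no constant term precisely because $(X_iX_j)^3=1$ already holds in $\dg_4$; and $(y_iy_jy_iy_k)^2=(1+\mu_{ijk})(w_i+{}^{\sigma_i}w_j+{}^{\sigma_i\sigma_j}w_i+{}^{\sigma_i\sigma_j\sigma_i}w_k)+(X_iX_jX_iX_k)^2$ with $\mu_{ijk}=(i\,j)(0\,k)$. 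The constant term $(X_iX_jX_iX_k)^2$ is one of the twelve nontrivial elements $A,\dots,R$ of $K_5$ described in Section~\ref{sect:K5}, so it is this last family that is substantive: one must solve $(1+\mu_{ijk})(\,\cdots\,)=(X_iX_jX_iX_k)^2$ for all distinct $i,j,k\in\{1,2,3,4\}$. Under the identification of $K_5$ with the group of even subgraphs of the complete graph on $\{0,1,2,3,4\}$ (Theorem~\ref{thm:kernel}) every term here is an explicit subgraph, and the whole system is finite linear algebra over $\F_2$ --- precisely the kind of computation the notation and multiplication table of Section~\ref{sect:K5} were set up for. One then writes down a solution $(w_1,\dots,w_4)$ (found by hand, or with the Java program mentioned in Section~\ref{sect:K5}), checks the relations directly, and concludes that $\sigma_i\mapsto w_iX_i$ defines the required splitting.

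The heart of the matter, and the main obstacle, is showing that this system is consistent, i.e.\ that the $w_i$ can be chosen simultaneously; equivalently, that the cohomology class in $H^2(S_5,K_5)$ classifying the extension \eqref{eq:ses} vanishes. The role of the graph model is exactly to make this a concrete, finite $\F_2$-check rather than an abstract cohomology computation, and --- in contrast with the case $n=3$, where \cite{Gracia-SazM:2009} showed no such choice of lifts exists --- for $n=4$ a solution does exist. An alternative would be to decompose $K_5$ explicitly as an $\F_2[S_5]$-module and compute the relevant $H^2$ directly, but the concrete route is shorter and dovetails with Section~\ref{sect:K5}.
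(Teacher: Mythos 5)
Your reduction is set up correctly: any section of $\pi\co\dg_4\to S_5$ must send $(0\,i)$ to an element of the form $w_iX_i$ with $w_i\in K_5$, and by von Dyck's theorem the existence of a section is equivalent to the solvability, in the $\F_2[S_5]$-module $K_5$, of the affine system you write down (the homogeneous conditions from $y_i^2$ and $(y_iy_j)^3$, and the inhomogeneous ones with constant terms $(X_iX_jX_iX_k)^2$). But that is only a reformulation of the proposition: the entire content of the statement is that this system \emph{is} consistent for $n=4$, and at exactly that point your argument stops. You say one "writes down a solution (found by hand, or with the Java program), checks the relations directly," but no $(w_1,\dots,w_4)$ is exhibited and no consistency argument is given; the assertion "for $n=4$ a solution does exist" is precisely the proposition being proved, so as written the proof is circular/incomplete. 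Note also that the obvious candidate suggested by Proposition \ref{prop:n2mod4}, namely $w_i=\gamma_{0i}$ the complete graph on $[4]\setminus\{0,i\}$, is not available here: for $n=4$ that graph has $\binom{3}{2}=3$ edges, an odd number, so it does not lie in $K_5$ at all. So the existence of suitable $w_i$ genuinely requires either an explicit verified solution (e.g.\ checked against Theorem \ref{thm:kernel} or Table \ref{table:mult}) or a computation of the relevant class in $H^2(S_5,K_5)$ — neither of which you carry out.

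For comparison, the paper's proof goes a different way and is complete modulo a machine computation: using the presentation of $\dg_4$ in Proposition \ref{prop:dg4rels}, GAP shows that $\dg_4$ contains a subgroup $S$ isomorphic to $S_5$; since the only proper nontrivial normal subgroup of $S_5$ is $A_5$, and the kernel of $\pi$ is an elementary abelian $2$-group, the restriction $\pi|_S\co S\to S_5$ has trivial kernel and hence is an isomorphism, so $S$ is a complement to $K_5$. Your route, if completed with an explicit tuple $(w_1,\dots,w_4)$ and a direct check of the relations in the graph model, would be a legitimate and in some ways more informative alternative (it produces the splitting explicitly rather than inferring its existence); but until that tuple is produced and verified, the key step is missing.
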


\begin{proof}
Using the presentation in Proposition \ref{prop:dg4rels} below, GAP \cite{GAP449}
shows that $\dg_4$ has subgroups isomorphic to $S_5$. Let $S$ be any
such subgroup. Then if the restriction of $\dg_4\to S_5$ to $S\to S_5$ is not an 
isomorphism, it must have kernel $A_5$ or $S$ itself. But the kernel of $\dg_4\to S_5$ 
is a product of $C_2$s. 
\end{proof}

\begin{prop}  \label{prop:n2mod4}
If $n= 2 \pmod{4}$, then the extension \eqref{eq:ses} splits.  
Specifically, let $\gamma_{ij} \in K_{n+1}$ correspond to the complete graph on the 
vertices $[n] \setminus i, j$.    
Then there is an isomorphism  $\Phi\co  \dg_n \to K_{n+1} \rtimes S_{n+1}$ defined by
\begin{equation}
\Phi(X_j) = \left( \gamma_{0j}, (0\,j) \right)
\end{equation}
for $j = 1, \ldots, n$.
\end{prop}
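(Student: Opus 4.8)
The plan is to verify directly that the assignment $\Phi(X_j) = (\gamma_{0j}, (0\,j))$ extends to a well-defined group homomorphism $\dg_n \to K_{n+1} \rtimes S_{n+1}$, and then to check that it is bijective. Since by Lemma~\ref{lem:gener} and the surrounding discussion $\dg_n$ is presented by the generators $X_1, \ldots, X_n$ subject to the relations $X_i^2 = 1$, $(X_iX_j)^3 = 1$, together with the relations coming from $K_{n+1}$ (conjugates of the $(X_iX_jX_iX_k)^2$, which generate the kernel by Proposition~\ref{prop:ijik}), it suffices to check that the proposed images satisfy these relations in $K_{n+1} \rtimes S_{n+1}$. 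The first task, then, is to identify $\gamma_{0j}$ concretely inside $\gra{n+1}$: it is the complete graph on the $n$ vertices $[n] \setminus \{0,j\}$, which I will need to express in terms of the $\ed{a}{b}$ and manipulate under symmetric difference and under the $S_{n+1}$-action.

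The key computations are as follows. First, $\Phi(X_j)^2 = (\gamma_{0j} + (0\,j)\cdot\gamma_{0j},\, 1)$; since $(0\,j)$ fixes $[n]\setminus\{0,j\}$ pointwise it fixes $\gamma_{0j}$, so this is the identity. Second, for the braid relation $(X_iX_j)^3 = 1$ one computes the $\gra{n+1}$-component of $\Phi(X_iX_j)^3$ as an alternating sum (under symmetric difference) of $(0\,i)$- and $(0\,j)$-translates of $\gamma_{0i}$ and $\gamma_{0j}$; one must check this telescopes to the zero graph. Here the hypothesis $n \equiv 2 \pmod 4$ is essential: the parity of $n$ controls, via the complete graph on $n$ vertices, whether certain valencies and edge-counts are even, and this is exactly what makes the relevant symmetric differences cancel. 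Third — and this is the main obstacle — one must verify the relation $(X_iX_jX_iX_k)^2 = $ (the prescribed kernel element) is respected: its image under $\Phi$ must be $(\psi((X_iX_jX_iX_k)^2),\, 1) = (\ed{0}{i}+\ed{0}{j}+\ed{k}{i}+\ed{k}{j},\, 1)$, and more generally the conjugates of these words must land on the correct elements of $\psi(K_{n+1})$. This is where the bulk of the work lies: expanding the eightfold product $(\Phi(X_i)\Phi(X_j)\Phi(X_i)\Phi(X_k))^2$ in the semidirect product, tracking how each $\gamma_{0a}$ gets permuted by the accumulated $S_{n+1}$-element, and summing under symmetric difference. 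Because $\gra{n+1}$ is an $\F_2$-vector space this is a finite linear computation, but keeping track of which vertices the intermediate permutations move into and out of the support of a complete graph on $n-2$ vertices is delicate, and again the congruence $n \equiv 2 \pmod 4$ is what forces the answer to match.

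Once $\Phi$ is known to be a homomorphism, surjectivity and injectivity come cheaply. Surjectivity: the images $\Phi(X_j)$ project onto the transpositions $(0\,j)$, which generate $S_{n+1}$, so the image surjects onto $S_{n+1}$; and the image meets the kernel $K_{n+1}\rtimes\{1\}$ in a subgroup containing all the $\psi((X_iX_jX_iX_k)^2)$ and their conjugates, which by Theorem~\ref{thm:kernel} generate $\psi(K_{n+1}) = K_{n+1}$ — so the image contains $K_{n+1}\rtimes\{1\}$ and surjects onto $S_{n+1}$, hence is everything. Injectivity: $\Phi$ followed by the projection to $S_{n+1}$ is just $\pi$, so $\ker\Phi \subseteq K_{n+1}$; and on $K_{n+1}$ the map $\Phi$ agrees with $\psi$ (both send $(X_iX_jX_iX_k)^2$ to the graph in \eqref{df:iota}), which is injective by Theorem~\ref{thm:kernel}. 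Finally the splitting itself is immediate: $\Phi$ is an isomorphism onto $K_{n+1}\rtimes S_{n+1}$ compatible with the projections to $S_{n+1}$ and with the inclusions of $K_{n+1}$, so it exhibits \eqref{eq:ses} as the split extension.

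Alternatively — and this may be the cleaner write-up — one can bypass part of the relation-checking by invoking Theorem~\ref{thm:final}: we already have the injection $\Psi\co\dg_n\to\gra{n+1}\rtimes S_{n+1}$, and it suffices to exhibit an automorphism (or at least an endomorphism) $\Theta$ of $\gra{n+1}\rtimes S_{n+1}$ which carries $\Psi(\dg_n)$ into the subgroup $K_{n+1}\rtimes S_{n+1}$ and carries $\Psi(X_j)=(\ed{0}{j},(0\,j))$ to $(\gamma_{0j},(0\,j))$. Such a $\Theta$ would be the identity on the $S_{n+1}$-factor and on the $\gra{n+1}$-factor an $S_{n+1}$-equivariant linear map sending $\ed{0}{j}$ to $\gamma_{0j}$; one checks $\gamma_{0j}$ is $(0\,j)$-fixed and that the assignment $\ed{0}{j}\mapsto\gamma_{0j}$ is consistent with the $S_{n+1}$-orbit relations among the $\ed{a}{b}$, which is again a parity computation requiring $n\equiv2\pmod4$. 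The hard part remains the same — confirming that this linear map is well-defined and that it genuinely lands inside $\psi(K_{n+1})$ rather than a larger subgroup of $\gra{n+1}$ — but the semidirect-product bookkeeping is organized more transparently.
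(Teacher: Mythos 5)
Your route is the paper's: verify in the semidirect product that $\Phi(X_i)^2=1$, that $(\Phi(X_i)\Phi(X_j))^3=1$, and that $(\Phi(X_i)\Phi(X_j)\Phi(X_i)\Phi(X_k))^2$ equals the square graph $\ed{0}{i}+\ed{0}{j}+\ed{k}{i}+\ed{k}{j}$ of (\ref{df:iota}), then get bijectivity because $\Phi$ agrees with $\psi$ on the kernel; your injectivity/surjectivity paragraph is exactly the (omitted) routine part of the paper's argument. However, you have misplaced the hypothesis $n\equiv 2\pmod 4$. The graph $\gamma_{0j}$ is the complete graph on the $n-1$ vertices $[n]\setminus\{0,j\}$ (not $n$ vertices): each vertex has valency $n-2$ and there are $\tfrac{1}{2}(n-1)(n-2)$ edges, and both are even precisely when $n\equiv 2\pmod 4$. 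This membership statement, $\gamma_{0j}\in K_{n+1}$ in the sense of Theorem \ref{thm:kernel}, is the \emph{only} place the congruence enters. The semidirect-product computations are parity-free: $(\gamma_{0i},(0\,i))(\gamma_{0j},(0\,j))(\gamma_{0i},(0\,i))=(\gamma_{0i}+\gamma_{ij}+\gamma_{0j},(i\,j))$, whose square is trivial because the graph part is $(i\,j)$-invariant, and the eightfold product has graph component the symmetric difference of the two ``complete graph minus a triangle'' graphs on $\{0,i,j\}$ and $\{i,j,k\}$, which is the square graph for \emph{every} $n$. So there is no telescoping that needs the congruence; looking for it there you will not find it, and, worse, if the membership check is omitted your argument would appear to prove splitting for all $n$, contradicting the non-split case $n=3$.

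The second point to repair is the justification of well-definedness: you cannot say $\dg_n$ ``is presented by'' $X_i^2$, $(X_iX_j)^3$ and the kernel words. Lemma \ref{lem:gener} only identifies the kernel as a normal closure; no presentation of $\dg_n$ is known for general $n$, and for $n=4$ the relations \eqref{eq:rels} alone define an infinite group, the extra relations \eqref{eq:rels4} of length $24$ and $32$ being needed (Proposition \ref{prop:dg4rels}). Two fixes are available. One is your own alternative route, which is the cleaner one: the assignment $\ed{a}{b}\mapsto\gamma_{ab}$ extends to an $S_{n+1}$-equivariant $\F_2$-linear endomorphism of $\gra{n+1}$, hence induces an endomorphism of $\gra{n+1}\rtimes S_{n+1}$, and composing with the injection $\Psi$ of Theorem \ref{thm:final} makes well-definedness of $\Phi$ automatic; one then only needs the membership $\gamma_{0j}\in K_{n+1}$ and the agreement with $\psi$ on the elements $(X_iX_jX_iX_k)^2$ (equivariance handles their conjugates) to conclude $\Phi$ restricts to $\psi$ on $K_{n+1}$, giving injectivity, and surjectivity as you say. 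The other fix keeps your first approach: let $F$ be the free group on the $X_i$; the three checks say the putative $\Phi$ and $\psi\circ(\text{projection to }\dg_n)$ agree on the words $X_i^2$, $(X_iX_j)^3$, $(X_iX_jX_iX_k)^2$, which normally generate $\ker(F\to S_{n+1})$; both induce homomorphisms from this kernel to the abelian group $K_{n+1}$, equivariant for conjugation by $F$ through $S_{n+1}$, so they agree on the whole kernel, and in particular every relation of $\dg_n$ is sent to the identity. Either way the computational content is exactly the paper's; only the bookkeeping needs to be stated correctly.
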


\begin{proof}
First, we notice that for $\gamma_{0j}$ to be an element of $K_{n+1}$ it needs to 
have an even number of edges and only even-valency vertices.  This happens exactly 
when $n=2 \pmod{4}$. To check that $\Phi$ is a homomorphism, we need to check the 
three conditions:
$$
\Phi(X_i)^2 = 1, \qquad
\left(\Phi(X_i)\Phi(X_j) \right)^3 = 1, \qquad
\left( \Phi(X_i) \Phi(X_j) \Phi(X_i) \Phi(X_k) \right)^2  = 
\raisebox{1.2pc}{
\xymatrix@=8pt{0 \ar@{-}[r] \ar@{-}[d] & i \ar@{-}[d] \\ j \ar@{-}[r] & k}
},
$$
for all distinct $i, j, k$.  The first is straightforward. For the second we have
\begin{equation}
\label{eq:gamma}
\left(\gamma_{0i}, (0\,i)\right)\,\left(\gamma_{0j}, (0\,j)\right)\,\left(\gamma_{0i}, (0\,i)\right) 
= \left(\gamma_{0i} + \gamma_{ij} + \gamma_{0j},\ (i\,j)\right). 
\end{equation}
Squaring this gives the identity, since $\gamma_{0i} + \gamma_{ij} + \gamma_{0j}$ is
preserved by $(i\,j)$. 

Now write $\beta_{i,0,j} = \gamma_{0i} + \gamma_{ij} + \gamma_{0j}$. This consists
of the null graph on $\{i,0,j\}$ and the full graph on the complementary set 
of vertices $[n]\backslash\{0,i,j\}$, with each of $0,i,j$ joined to each of the
vertices in $[n]\backslash\{0,i,j\}$. We have
$$
\left(\gamma_{0i}, (0\,i)\right)\,\left(\gamma_{0j}, (0\,j)\right)\,\left(\gamma_{0i}, (0\,i)\right)\,\left(\gamma_{0k}, (0\,k)\right) 
= (\beta_{i,0,j} + \gamma_{0k},\ (i\,j)\,(0\,k)). 
$$
Squaring this, the kernel term is $\beta_{i,0,j} + \beta_{i,j,k}$ and this is the
graph in (\ref{df:iota}). 
\end{proof}

We do not know whether \eqref{eq:ses} splits for $n\geq 8$ a multiple of $4$, or what the
situation is for odd values $\geq 5$. In particular we withdraw the announcement at the 
end of Section~4 of \cite{Gracia-SazM:2009}, that \eqref{eq:ses} splits if and only 
if $n$ is even. 

Although $\dg_n$ is not always a semidirect product, we can always see it as a subgroup of 
a semidirect product, as Theorem \ref{thm:final} demonstrates. Indeed
Theorem \ref{thm:final} provides perhaps the most enlightening model for $\dg_n$.  
For a dualization operation $W \in \dg_n$ with $\Psi(W) = (\gamma, \lambda)$,
the permutation $\lambda \in S_{n+1}$ gives the action of the functor $W$ 
on the building bundles, and the graph $\gamma \in \gra{n+1}$ 
tells us the action of $W$ on the ``set of changes of decompositions'' $\G{\BE}$.  
\label{pref:flip}
One may regard $\gamma$ as measuring to what extent $W$ fails to be merely a rearrangement
of the building bundles. 

Equivalently, if $\Psi(W_1) = (\gamma_1, \lambda_1)$ and 
$\Psi(W_2) = (\gamma_2, \lambda_2)$ have $\lambda_1 = \lambda_2$, then 
$\gamma_1 - \gamma_2$ measures the failure of $W_1$ and $W_2$ to be naturally isomorphic 
functors.

As a final general result, we compute the centre of $\dg_n$.   
Let $n \geq 2$.  A central element needs 
to be in $K_{n+1}$, since $S_{n+1}$ has trivial centre, and it needs to be invariant under 
the $S_{n+1}$--action.  The only options are the empty graph, which is the identity, and 
the complete graph on $n+1$ vertices.  The complete graph is only an element of 
$K_{n+1}$ when $n=0 \pmod{4}$.  Hence we conclude:

\begin{prop}
If $n$ is a multiple of $4$, then $\vert Z(\dg_n) \vert = 2$.  
Otherwise, $\dg_n$ has trivial centre.
\end{prop}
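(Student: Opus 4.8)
The plan is to use the short exact sequence \eqref{eq:ses} together with the embedding $\Psi\co\dg_n\hookrightarrow\gra{n+1}\rtimes S_{n+1}$ from Theorem \ref{thm:final}. The first step is to observe that any central element $z\in Z(\dg_n)$ must map under $\pi$ to a central element of $S_{n+1}$; since $S_{n+1}$ has trivial centre for $n+1\geq 3$, i.e.\ for $n\geq 2$, we get $\pi(z)=\id$ and hence $z\in K_{n+1}$. Next, because $K_{n+1}$ is abelian, the conjugation action of $\dg_n$ on $K_{n+1}$ descends to the $S_{n+1}$-module structure on $K_{n+1}$ discussed in \S\ref{sect:tgk}, so $z$ being central in $\dg_n$ is equivalent to $z$ being fixed by the full $S_{n+1}$-action on $K_{n+1}$.

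The second step is to identify the $S_{n+1}$-invariants of $K_{n+1}$ via the isomorphism $\psi\co K_{n+1}\to\gra{n+1}$ of $S_{n+1}$-modules from Theorem \ref{thm:kernel}. Since $\psi$ is an injective morphism of $S_{n+1}$-modules whose image is exactly the subgroup of $\gra{n+1}$ consisting of graphs with all vertices of even valency and an even total number of edges, it suffices to find the $S_{n+1}$-fixed points of $\gra{n+1}$ and intersect with this image. A subgraph of the complete graph on $n+1$ vertices is fixed by every vertex permutation precisely when its edge set is either empty or all of $\binom{[n]}{2}$; so the only candidates are the empty graph (corresponding to the identity of $K_{n+1}$) and the complete graph $K_{n+1}^{\mathrm{full}}$ on $n+1$ vertices.

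The third step is to decide when the complete graph lies in $\psi(K_{n+1})$. Each vertex of the complete graph on $n+1$ vertices has valency $n$, and the total number of edges is $\binom{n+1}{2}=\frac{1}{2}n(n+1)$. The even-valency condition requires $n$ even; given $n$ even, $\frac{1}{2}n(n+1)$ is even iff $\frac{n}{2}$ is even (since $n+1$ is odd), i.e.\ iff $4\mid n$. Hence the complete graph belongs to $\psi(K_{n+1})$ exactly when $n\equiv 0\pmod 4$, and in that case $Z(\dg_n)$ has order $2$, generated by $\psi^{-1}(K_{n+1}^{\mathrm{full}})$; otherwise $Z(\dg_n)$ is trivial.

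There is no serious obstacle here: every ingredient — triviality of $Z(S_{n+1})$, the reduction of the conjugation action to the $S_{n+1}$-module structure, and the combinatorial description of $\psi(K_{n+1})$ in Theorem \ref{thm:kernel} — is already available. The only point requiring a moment's care is the congruence bookkeeping in the third step, namely checking both the even-valency and even-edge-count conditions for the complete graph, which is the routine computation carried out above.
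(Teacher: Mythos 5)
Your proposal is correct and follows essentially the same route as the paper: reduce to $K_{n+1}$ using the triviality of $Z(S_{n+1})$, note that centrality is invariance under the $S_{n+1}$-action, identify the fixed graphs as the empty and complete graphs via Theorem \ref{thm:kernel}, and check that the complete graph satisfies the even-valency and even-edge-count conditions precisely when $n\equiv 0\pmod 4$. The only difference is that you spell out the congruence bookkeeping which the paper leaves implicit.
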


A remaining question is to describe $\dg_n$ in terms of relations in the $X_i$. From
Proposition \ref{prop:ijik} we know that in $\dg_n$ for any $n\geq 3$ the relations
\begin{equation}
\label{eq:rels}
X_i^2,\ (X_iX_j)^3,\ (X_iX_jX_iX_k)^4,
\end{equation}
where $i, j, k$ are distinct, hold. For $n = 4$ it is easy to verify with GAP 
\cite{GAP449} that the group defined by these relations is infinite (and has Mathieu
groups as quotients). From Table~\ref{table:mult} we see that $AK = P$ and that
$AD = MQ$. These give the relations 
\begin{equation}
\label{eq:rels4}
12131213.1413141.4243424, \quad
12131213.12141214.24342434.14341434 
\end{equation}
We would like to have an interpretation of these relations comparable to the interpretations
of $(ij)^3 = 1$ in terms of the duality of doubles, or the notion of cornering in 
\cite{Mackenzie:2005dts}.

\begin{prop}
\label{prop:dg4rels}
The group $\dg_4$ is the group on generators $X_i$, $1\leq i \leq 4$, subject to the
relations {\rm (\ref{eq:rels})} and {\rm (\ref{eq:rels4})}. 
\end{prop}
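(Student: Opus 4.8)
The plan is the standard two--step argument for recognising a finitely presented group. Write $G$ for the abstract group with generators $X_1,\dots,X_4$ subject to the relators (\ref{eq:rels}) and (\ref{eq:rels4}), and recall that $|\dg_4| = 2^{5}\cdot 5! = 3840$ by Corollary~\ref{cor:orderK}. First I would produce a surjection $\rho\colon G\to\dg_4$ sending $X_i$ to $X_i$; then I would show $|G|\le 3840$, so that $\rho$ is forced to be an isomorphism.

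For the surjection it is enough to check that the dualization functors $X_i\in\dg_4$ satisfy the relators, since they generate $\dg_4$. Of the relators (\ref{eq:rels}), the identity $X_i^2=1$ is ordinary vector bundle duality, $(X_iX_j)^3=1$ is the double case (\ref{eq:xyxyxy}), and $(X_iX_jX_iX_k)^4=1$ holds because $(X_iX_jX_iX_k)^2$ is an element of order $2$ of $\dg_4$ by Proposition~\ref{prop:ijik}. For (\ref{eq:rels4}) I would read off from Table~\ref{table:mult} the two identities $AK=P$ and $AD=MQ$, where in the notation of \S\ref{sect:K5} one has $A=(X_1X_2X_1X_3)^2$, $D=(X_1X_2X_1X_4)^2$, $K=(X_1X_4X_1X_3)^2$, $P=(X_4X_2X_4X_3)^2$, $M=(X_4X_3X_4X_1)^2$ and $Q=(X_4X_3X_4X_2)^2$; each of these elements is an involution in $K_5$, so the identities rewrite as $AKP=1$ and $ADQM=1$, and expanding in the $X_i$ and freely reducing gives precisely the relators displayed in (\ref{eq:rels4}). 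Hence $\rho$ is well defined and surjective.

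For the order bound, let $H\trianglelefteq G$ be the normal closure of the set $\{(X_iX_jX_iX_k)^2 : i,j,k\text{ distinct}\}$. In the quotient $G/H$ the relators (\ref{eq:rels}) reduce to $X_i^2$, $(X_iX_j)^3$ and $(X_iX_jX_iX_k)^2$ (the fourth powers becoming redundant), while the relators (\ref{eq:rels4}), being words in the elements $(X_aX_bX_aX_c)^2$ that generate $H$, become trivial; thus $G/H$ is exactly the standard presentation of $S_5$ on the transpositions $(0\,i)$ recalled before Lemma~\ref{lem:gener}, so $|G/H| = 120$. On the other hand $\rho$ maps $H$ onto $K_5$: indeed $\rho(H)$ is the normal closure in $\dg_4$ of the images of the $(X_iX_jX_iX_k)^2$, which is $K_5$ by Proposition~\ref{prop:ijik}, and $|K_5| = 32$. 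It therefore suffices to prove $|H|\le 32$; together with $|G/H| = 120$ this gives $|G|\le 120\cdot 32 = 3840 = |\dg_4|$, hence $|H| = 32$ and $\rho$ is an isomorphism.

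Everything thus comes down to the bound $|H|\le 32$, and this is the main obstacle: the relators (\ref{eq:rels}) alone define an infinite group, so the two long relators (\ref{eq:rels4}) are genuinely necessary, and they were found precisely by isolating, from the explicit computation of $K_5$ and its multiplication Table~\ref{table:mult}, the identities among the twelve elements $A,\dots,R$ and their $X_i$--conjugates needed to collapse the kernel to $32$ elements. I would discharge the bound by a Todd--Coxeter coset enumeration in GAP \cite{GAP449}, which terminates on the presentation of $G$ and returns $|G| = 3840$ directly. Alternatively one can argue by hand: using (\ref{eq:rels}) and (\ref{eq:rels4}) one checks that the $32$ words listed in Table~\ref{table:mult} contain every $(X_iX_jX_iX_k)^2$ and are closed under multiplication and under conjugation by the $X_i$, so that $H$ consists of at most those $32$ elements; but this merely reproduces the finite computation that Table~\ref{table:mult} already records, so the machine verification is the honest route.
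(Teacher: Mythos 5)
Your argument is correct and is essentially the paper's own: the authors likewise obtain the relators of (\ref{eq:rels4}) from the identities $AK=P$ and $AD=MQ$ in Table~\ref{table:mult} and then simply declare the presentation verified by ``a straightforward calculation in GAP,'' which is exactly the surjection-plus-coset-enumeration scheme you spell out (with the order $3840$ coming from Corollary~\ref{cor:orderK}). Your write-up just makes the logical skeleton of that GAP computation explicit, which is a fair elaboration rather than a different method.
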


This is again a straightforward calculation in GAP \cite{GAP449}. The general case 
is the subject of ongoing work. 

\subsection*{Acknowledgements}

Gracia-Saz's research was partially supported by fellowships from
the Secretar\'ia de Estado de Universidades e Investigacion del Ministerio
Espa\~nol de Educaci\'on y Ciencia and from the Japanese Society for the Promotion of
Science. Both authors thank the University of Sheffield for funding visits by
Gracia-Saz to Sheffield. 
In preparing the paper, Mackenzie benefitted from a Java program written by 
Ms. Diksha Rajen, 
a graduate student in Computer Science at Sheffield, under the supervision
of Dr. Mike Stannett; he is grateful to them both. 

\newcolumntype{Z}{>{$}c<{$}}
\newcolumntype{L}{>{$}l<{$}}
\newcolumntype{R}{>{$}r<{$}}

\begin{landscape}
\begin{table}
\begin{tabular}{|L|ZZZ|ZZZ|ZZZ|ZZZ|ZZZ|}
\hline
            & A    &   B   & C  & D   & E   & F  & K   & L   & M   & P   & Q   & R & T & U & V \\
\hline
 (12,0,34)  & +    &   -   & -  & +   & -   & -  & -   & -   & +   & -   & +   & - & + & + & + \\
 (13,0,24)  & -    &   +   & -  & -   & -   & +  & -   & +   & -   & +   & -   & - & + & + & + \\
 (14,0,23)  & -    &   -   & +  & -   & +   & -  & +   & -   & -   & -   & -   & + & + & + & + \\
&&&&&&&&&&&&&&&\\                                                                         
 (02,1,34)  & -    &   +   & -  & -   & +   & -  & -   & -   & +   & +   & +   & + & + & - & - \\
 (03,1,24)  & +    &   -   & -  & -   & -   & +  & +   & -   & -   & +   & +   & + & - & - & + \\
 (04,1,23)  & -    &   -   & +  & +   & -   & -  & -   & +   & -   & +   & +   & + & - & + & - \\
&&&&&&&&&&&&&&&\\                                                                                 
 (01,2,34)  & -    &   -   & +  & -   & -   & +  & +   & +   & +   & -   & +   & - & + & - & - \\
 (03,14,2)  & +    &   -   & -  & -   & +   & -  & +   & +   & +   & +   & -   & - & - & + & - \\
 (04,13,2)  & -    &   +   & -  & +   & -   & -  & +   & +   & +   & -   & -   & + & - & - & + \\
&&&&&&&&&&&&&&&\\                                                                         
 (01,24,3)  & -    &   -   & +  & +   & +   & +  & -   & -   & +   & +   & -   & - & - & - & + \\
 (02,14,3)  & -    &   +   & -  & +   & +   & +  & +   & -   & -   & -   & +   & - & - & + & - \\
 (04,12,3)  & +    &   -   & -  & +   & +   & +  & -   & +   & -   & -   & -   & + & + & - & - \\
&&&&&&&&&&&&&&&\\                                                                         
 (01,23,4)  & +    &   +   & +  & -   & -   & +  & -   & -   & +   & -   & -   & + & - & + & - \\
 (02,13,4)  & +    &   +   & +  & -   & +   & -  & -   & +   & -   & -   & +   & - & - & - & + \\
 (03,12,4)  & +    &   +   & +  & +   & -   & -  & +   & -   & -   & +   & -   & - & + & - & - \\
&&&&&&&&&&&&&&&\\                                                                                
\hline\hline
 (012,3,4)  & +    &   +   & +  & +   & +   & +  & -   & -   & +   & -   & +   & - & + & + & + \\
 (013,2,4)  & +    &   +   & +  & -   & -   & +  & +   & +   & +   & +   & -   & - & - & + & - \\
 (014,2,3)  & -    &   -   & +  & +   & +   & +  & +   & +   & +   & -   & -   & + & - & - & + \\
\hline                                                                                  
 (023,1,4)  & +    &   +   & +  & -   & +   & -  & +   & -   & -   & +   & +   & + & - & - & + \\
 (024,1,3)  & -    &   +   & -  & +   & +   & +  & -   & +   & -   & +   & +   & + & - & + & - \\
 (034,1,2)  & +    &   -   & -  & +   & -   & -  & +   & +   & +   & +   & +   & + & + & + & + \\
\hline                                                                                  
 (123,0,4)  & +    &   +   & +  & +   & -   & -  & -   & +   & -   & -   & -   & + & + & - & - \\
 (124,0,3)  & +    &   -   & -  & +   & +   & +  & +   & -   & -   & +   & -   & - & + & - & - \\
 (134,0,2)  & -    &   +   & -  & -   & +   & -  & +   & +   & +   & -   & +   & - & + & - & - \\
 (234,0,1)  & -    &   -   & +  & -   & -   & +  & -   & -   & +   & +   & +   & + & + & - & - \\
\hline                                                                                  
            & A    &   B   & C  & D   & E   & F  & K   & L   & M   & P   & Q   & R & T & U & V\\
\hline
\end{tabular}
\caption{\label{table:action}See \S\ref{sect:K5}}
\end{table}
\end{landscape}

\begin{table}
\begin{tabular}{||Z||Z|Z||Z|Z|Z||Z|Z|Z||Z|Z|Z||Z|Z|Z||}
\hline
  & B   & C   & D & E       & F & K & L & M & P & Q    & R & T & U & V\\
\hline\hline
A & C   & B   & T & \ell  & r & P & e & U & K & V      & f & D & M & Q \\  
B &     & A   & k & Q       & U & d & v & r & t & E    & m & p & F & \ell \\
C &     &     & p & V       & M & t & q & F & d & \ell & u & k & r & E \\
\hline\hline
D &     &     &   & F       & E & b & R & V & c & U    & L & A & Q & M \\
E &     &     &   &         & D & u & a & p & m & B    & t & r & k & C \\
F &     &     &   &         &   & q & t & C & v & k    & a & \ell & B & p \\
\hline\hline
K &     &     &   &         &   &   & M & L & A & f    & V & c & e & R \\
L &     &     &   &         &   &   &   & K & U & c    & D & f & P & b \\
M &     &     &   &         &   &   &   &   & e & T    & b & Q & A & D \\
\hline\hline
P &     &     &   &         &   &   &   &   &   & R    & Q & b & L & f \\
Q &     &     &   &         &   &   &   &   &   &      & P & M & D & A \\
R &     &     &   &         &   &   &   &   &   &      &   & e & c & K \\
\hline\hline
T &   &   &   &   &   &   &   &   &   &   &   &  & V & U \\
U &   &   &   &   &   &   &   &   &   &   &   &  &   & T \\
\hline\hline
\end{tabular}
\caption{\label{table:mult}See \S\ref{sect:K5}}
\end{table}

\newcommand{\noopsort}[1]{} \newcommand{\singleletter}[1]{#1} \def\cprime{$'$}
  \def\cprime{$'$}

\end{document}